\renewcommand{\mathcal}{\mathscr}
\def\R {\mathbb{R}}
\def\N {\mathbb{N}}
\renewcommand{\epsilon}{\varepsilon}
\newcommand{\eps}{\varepsilon}
\newcommand{\e}{\varepsilon}
\renewcommand{\leq}{\leqslant}
\renewcommand{\le}{\leqslant}
\renewcommand{\geq}{\geqslant}
\renewcommand{\ge}{\geqslant}
\newcommand{\Per}{\mathrm{Per}}
\newtheorem{proposition}{Proposition}[section]
\newtheorem{theorem}[proposition]{Theorem}
\newtheorem{corollary}[proposition]{Corollary}
\theoremstyle{definition}
\newtheorem{definition}[proposition]{Definition}
\newtheorem{remark}[proposition]{Remark}
\numberwithin{equation}{section}
\title{$K$ mean-convex and $K$-outward minimizing sets} 
\author[A. Cesaroni, M. Novaga]{}
\subjclass{ 53C44, 35R11, 49Q20.}
\keywords{Nonlocal curvature flows, mean convexity, nonlocal minimal surfaces, level set flow, minimizing movements.}
 \email{annalisa.cesaroni@unipd.it}
 \email{matteo.novaga@unipi.it}
\begin{document}

\maketitle

\centerline{\scshape Annalisa Cesaroni}
\medskip
{\footnotesize
 \centerline{Dipartimento di Scienze Statistiche}
   \centerline{Universit\`a di Padova}
   \centerline{Via Cesare Battisti 141, 35121 Padova, Italy  }
} 

\medskip

\centerline{\scshape Matteo Novaga}
\medskip
{\footnotesize
 \centerline{Dipartimento di Matematica}
   \centerline{Universit\`a di Pisa}
   \centerline{Largo Bruno Pontecorvo 5, 56127 Pisa, Italy  }
}

\medskip

\begin{abstract}
We consider  the evolution  of sets by  nonlocal mean curvature and
we discuss the preservation along the flow 
of two geometric properties, which are the mean convexity and the outward minimality. 
The main tools in our analysis 
are the level set formulation and the minimizing movement scheme  for the nonlocal flow.
When the initial set is outward minimizing, we also show the convergence of the (time integrated) 
nonlocal perimeters of the discrete evolutions to the nonlocal perimeter of the limit flow.
 \end{abstract} 

\maketitle

 \tableofcontents
\section{Introduction} 

Given an initial set~$E\subset\R^n$, we 
consider its evolution~$E_t$ for~$t>0$ according to
the nonlocal curvature flow
\begin{equation}\label{kflow} 
\partial_t x\cdot \nu=-H^K_{E_t}(x)
\end{equation}  
where $\nu$ is the outer normal at $x\in \partial E_t$.
The quantity~$H_E^K(x)$ is the
$K$-curvature of~$E$ at $x$, which is defined in~\eqref{CURVA} below. 
More precisely, we take a kernel~$K:\R^n\setminus\{0\}\to[0,+\infty)$
such that \begin{equation}\label{ROTAZION2}
\min\{1, |x|\}\, K(x) \in L^1(\R^n)\qquad \text{ and }\qquad K(x)=K(-x)\end{equation}
and we define  the $K$-curvature of a set~$E$ of class $C^{1,1}$,
at~$x\in \partial E$, as
\begin{equation}\label{CURVA} H^K_E(x):=\lim_{\e\searrow0}
\int_{\R^n\setminus B(x,\e)}\Big(
\chi_{\R^n\setminus E}(y)-\chi_E(y)\Big)\,K(x-y)\,dy,\end{equation}
where as usual
$$ \chi_E(y):=\left\{
\begin{matrix}
1 & {\mbox{ if }}y\in E,
\\ 0 & {\mbox{ if }}y\not\in E.
\end{matrix}
\right. $$
For more general sets the $K$-curvature will be understood
 in the viscosity sense (see Definition \ref{viscosity} below) and may be also infinite.

We point out that~\eqref{ROTAZION2} is a very mild integrability assumption,
which fits the requirements in~\cites{imbert, MR3401008}
in order to have existence and uniqueness for the
level set flow associated to~\eqref{kflow}. 
Furthermore, when $K(x)= \frac{1}{|x|^{n+s}}$ for some $s\in (0,1)$, we will denote the 
$K$-curvature of a set~$E$ at a point~$x$ as~$H^s_E(x)$, and we indicate it as
the fractional mean curvature of~$E$ at~$x$.
We also observe that 
the $K$-curvature  is the the first variation of the following nonlocal perimeter functional, see \cites{MR3401008}, 
\begin{equation}\label{per} \Per_K(E):=  \int_{E}\int_{\R^n\setminus E} K(x-y)\,dx\,dy, 
\end{equation} 
and the geometric evolution law~\eqref{kflow} can be interpreted as the $L^2$
gradient flow of this perimeter functional, as shown in~\cite{MR3401008}. 
 
The $K$-curvature flow has been recently studied from different perspectives, mainly in the case of the fractional mean curvature, taking into account  several geometric features. 
In particular  we recall the results about small time existence of a classical solutions \cite{vesa}, 
existence and uniqueness of level set solutions \cites{imbert,MR3401008},
preservation
of convexity \cite{ruf, cinti2}, formation of singularities \cite{cinti1}, classification of symmetric self-shrinkers \cite{cnself}, fattening phenomena \cite{fattening} and stability results for nonlocal curvature flows \cite{av, cdnp}. 

In this paper we are interested in the analysis of  the  flows starting from   $K$-mean convex sets, that is, sets  with positive $K$-curvature,
and from  sets which are one-side minimizers of the nonlocal perimeter functional, the so called $K$-outward minimizing set. This second  property can be interpreted as  the variational analogue 
of the $K$-mean convexity, as we will see in Theorem \ref{euler}. In the case of the fractional curvature, the preservation of the $K$-mean convexity
for smooth sets has been studied in \cite{SAEZ}.  Here we consider more general flows, and also nonsmooth initial data. 
We show that $K$-mean convexity is a too weak condition to be conserved during the evolution, as a consequence we introduce 
the notions of regular $K$-mean convexity and strong $K$-mean convexity (see Definition \ref{kmean}). We introduce the notion of $K$-outward minimality and strong $K$-outward minimality (see Definition \ref{outward}). 
The main results are contained in Theorem \ref{thmconvexity}, about the preservation of regular $K$-mean convexity and strong $K$-mean convexity, 
and Theorem \ref{presmin} about preservation of $K$-outward minimality.   
Our main tools are the level set approach for geometric nonlocal curvature flows, developed in  \cite{imbert,MR3401008}, that we review in Section \ref{sezionelevel}, and the variational scheme,  called minimizing movements or Almgren-Taylor-Wang scheme, introduced  in \cite{ATW,LS} for the classical mean curvature flow,  and  extended to the nonlocal setting in  \cite{MR3401008}. 
 
We conclude by recalling  that, in the local case, there is a vast literature on the analysis of  the mean curvature flow starting from convex sets (see \cite{GH, gigagotoishiisato, MR2238463, soner, belle}) and more generally from
mean-convex sets (see \cite{white, spadaro, dephilaux, chn} and reference therein).
In particular, these geometric properties are preserved by the flow, both in the isotropic and in the anisotropic case, and the singularity formation is 
well understood (see for instance \cite{Hu,HS,HK,HW}).
 
\smallskip
 
 The paper is organized as follows: Section \ref{sezionedefinizioni} contains the definition of $K$-mean convexity and $K$-outward minimality, 
 some examples, and the analysis of the relation between the two notions. 
 Section \ref{sezionelevel} is essentially a review of the level set formulation of nonlocal curvature flows, and  contains  the comparison results between level set flows and classical strict subflows and superflows.  Section \ref{sezionekmean} is devoted to the analysis of the flows starting from $K$-mean convex sets. Section \ref{sezionemin} provides a review of the minimizing movement scheme in the nonlocal setting. Finally, Section \ref{sezionekoutward} contains the analysis of the flows starting from $K$-outward minimizing sets.
 
 \smallskip

\noindent\textbf{Acknowledgments:} The authors are members of the Gruppo Nazionale per l'Analisi Matematica, la Probabilit\`a e le loro Applicazioni (GNAMPA) of the Istituto Nazionale di Alta Matematica (INdAM). M.N. acknowledges partial support by the PRIN 2017 Project
\emph{Variational methods for stationary and evolution problems with singularities and interfaces}.
 
   \section{Main definitions and properties} \label{sezionedefinizioni} 
  In this section we introduce the notions of $K$-mean convexity and $K$-outward minimality, 
  we give some examples and characterizations of these properties, and we analyze their relation.
 
We now recall the definition of constant $K$-mean curvature in the viscosity sense,
for more details we refer to  \cites{imbert, MR3401008} 
and  to~\cite[Section~5]{MR2675483}.
  \begin{definition}\label{viscosity} 
Let $E\subseteq \R^n$ and $x\in\partial E$. Then 
\begin{enumerate}
\item $H^K_E(x)\leq c$ if for all sets $F$ with compact boundary of class $C^{1,1}$ such that $E\subseteq F$ and $x\in \partial F$, there holds $H_F^K(x)\leq c$;
\item $H^K_E(x)\geq c$ if for all sets $F$ with compact boundary of class $C^{1,1}$ such that $E\supseteq F$ and $x\in \partial F$, there holds $H_F^K(x)\geq c$;
\item $H^K_E(x)= c$ if  both $H^K_E(x)\geq c$ and $H^K_E(x)\leq c$.
\end{enumerate}
\end{definition} 

From \eqref{CURVA} it follows that the $K$-mean curvature satisfies the following monotonicity property:
if $E\subseteq F$ and $x\in \partial E\cap\partial F$ is a point where both  $H^K_E(x)$ and $H^K_F(x)$ are defined,
then $H^K_E(x)\geq H^K_F(x)$.
As a consequence, the inequalities in Definition \ref{viscosity} are consistent with the definition of $H^K_E$ in
\eqref{CURVA}.

We observe that the viscosity inequality $H^K_E(x)\leq c$ can be checked only at points $x\in \partial E$ where $E$ satisfies an exterior ball condition, that is, there exists $y_0, r_0$ such that $B(y_0, r_0)\subseteq \R^n\setminus E$, $x\in \partial B(x_0, r_0)$. 
Analogously the viscosity inequality $H^K_E(x)\geq c$ can be checked only  at points $x\in \partial E$ where $E$ satisfies an interior ball condition, that is, there exists $y_0, r_0$ such that $B(y_0, r_0)\subseteq  E$, $x\in \partial B(x_0, r_0)$.
In particular, if $E$ is a closed set with empty interior, then the viscosity inequality $H^K_E(x)\geq k$ is always verified for every $k\in \R$.

We will denote as usual the distance between a point $x$ and a set $E$ as $d(x,E)=\inf_{y\in E}|y-x|$, 
and we define  the signed distance from $E$ as follows
 \[d_E(x)=d(x,\R^n\setminus E)-d(x,E).\]
 We define for $\lambda>0$,
 \begin{equation}\label{ingr} E^\lambda:=\{x\in\R^n\text{ s.t. }d_E(x)\geq -\lambda\}=\{x\in\R^n\text{ s.t. }d(x,E)\leq \lambda\}.\end{equation} 
Observe that   if $E$ is a closed set  then $E=\cap_{\lambda>0}E^\lambda$. 
 
Finally, we  define  the distance between two sets $A, B\subseteq \R^n$,  as follows
 \[d(A, B)=\inf_{a\in \partial A, b\in \partial B} |a-b|.\] 

  \begin{definition}[$K$-mean convexity, regular $K$-mean convexity,  strong $K$-mean convexity] \label{kmean} \ \ 
  \begin{enumerate}
\item A closed  set $E\subseteq \R^n$ is {\bf $K$-mean convex} if $H_E^K(x)\geq 0$ for all $x\in\partial E$.
\item A closed set $E\subseteq \R^n$ is {\bf regularly $K$-mean convex} if 
there exists   $\eta_E>0$ and $c_E> 0$ such that  for all $\lambda\in [0, \eta_E]$ \[  H^K_{E^\lambda}(x) \geq -c_E\lambda \qquad \text{for any $x \in \partial E^\lambda$.}\] 
where $E^0=E$.    
\item A closed set $E\subseteq \R^n$ is {\bf strongly $K$-mean convex} if   there exists $\delta\geq 0$  and  $\xi_E>0$  such  that \[
  H^K_{E^\lambda}(x) \geq \delta\qquad \text{for any $x \in \partial E^\lambda$}\]  for every $\lambda\in [0, \xi_E]$.  

To keep track of the constant $\delta$ we will say in the following that $E\subseteq \R^n$ is   {\bf   strongly  $K$-mean convex set} with associated constant $\delta$. 
\end{enumerate} 
  \end{definition}
\begin{remark}\upshape \label{regularset} 
Note that  if $E$   is strongly $K$-mean convex, then $E$ is also regularly $K$-mean convex.\end{remark}
\begin{remark}[Sets with $C^{1,1}$ boundary] \upshape 
Let $E$ be a compact set with $C^{1,1}$ boundary. 

If  $H^K_E(x)\geq \delta$  $\forall x\in \partial E$,  then for all $\delta'<\delta$,  there exists $\xi_E(\delta')$ such that  \[H_{E^\eta}^K(x)\geq \delta'\qquad \text{ for all $\eta\in [0, \xi_E(\delta')]$ and $x\in \partial E^\eta$}\]  due to the continuity of  $H^K$ with respect to $C^{1,1}$ convergence of sets, see \cite{MR3401008}, and therefore  $E$   is strongly $K$-mean convex with constant $\delta' $.

If $H^K_E(x)\geq 0$  $\forall x\in \partial E$,   and $K(x)=\frac{1}{|x|^{n+s}}$, then  \[\text{$E$   is regularly  $K$-mean convex, }\] due to the result about the variation of  fractional curvature with respect to $C^{1,1}$ diffeomorphisms of sets proved in \cite{cozzi}.  
\end{remark}

\begin{remark}[Convex sets] \upshape 
Let $C$ be a  convex   closed set. Then  \[\text{$C$ is strongly $K$-mean convex with associated constant $0$}\] since it is easy to show that $H^K_C(x)\geq 0$ for every $x\in \partial C$ in the viscosity sense, and moreover $C^\lambda$ are convex sets.   Moreover, if $C$ is compact and 
$\text{ supp } K$  is not compact, then there exists $\delta_C>0$ depending on $K$ and $C$ such that \[\text{$C$ is   strongly $K$-mean convex with associated constant $\delta_C$.}\] Indeed, it is easy to check that if $C\subseteq \R^n$ is a convex set of diameter $R$, then \[H^K_C(x)\geq \int_{\R^n\setminus B(0,R)}K(y)dy:=\delta_C\qquad \text{ for every $x\in \partial C$.}\]   
\end{remark}  

\begin{remark}[Set with positive curvature which is not regularly $K$-mean convex]  \label{counter}
We point out that  if $E$ is a set such that $H^K_E(x)\geq \delta>0$   for  $x\in \partial E$, but $\partial E\not\in C^{1,1}$, then in general it is not true that $E$ is regularly $K$-mean convex. 

 We recall the following example studied in  \cite{fattening}. 
 We consider  the fractional kernel in dimension $2$, that is $K(x)=\frac{1}{|x|^{2+s}}$. 
We define the set $E$ as follows
\[ E:= {\mathcal{G}}_+\cup {\mathcal{G}}_-\subseteq\R^2,\] where ${\mathcal{G}}_+$ is the convex hull of~$B((-1,1),1)$ with the origin,
and~${\mathcal{G}}_-$ the convex hull of~$B((1,-1),1)$ with the origin.

Note that  $\partial E\setminus (0,0)$ is $C^{1,1}$ and in  $(0,0)$ the viscosity  supersolution condition $H^K_E(0,0)\geq \delta$ is true for every $\delta$ since there is no interior ball in $E$ containing $(0,0)$, that is,  there are no regular sets $F$ such that $F\subseteq E$ and $(0,0)\in \partial F$.   It is an easy computation to check, using the radial symmetry of $K$,  that  for all $x\neq 0$, $x\in \partial E$ there holds  \[H^s_E(x)\geq \int_{\R^2\setminus B(0, 1+\sqrt{2})} \frac{1}{|y|^{2+s}}dy= \frac{2\pi}{(1+\sqrt{2})^s} .\]

Let   $Q_r=\{(x_1,x_2)\in \R^2\ \text{ s.t. }x_2\in [-r,r], \ -|x_2|\leq x_1\leq |x_2|\}$.  It has been proven in \cite[Lemma 7.1]{fattening} that there exists a   constant $c>0$ depending on $s$ such that for all $r<c$ there holds 
\[H^s_{E\cup Q_r}(t, r), \ H^s_{E\cup Q_r}(t,- r)\leq -\frac{c}{r^s}\qquad \text{ for all $t\in (-r,r)$}.\] 
Note that  for every point $(t,- r), (t,r)$ with $t\in (-r,r)$ there exists a neighborhood where $\partial (E\cup Q_r)$ is $C^{1,1}$, therefore the previous inequality holds in classical sense.  
Consider now $E^r=\{x\in \R^n\text{ s.t. }d(x, E)\leq r\}$ and note that $(0,r)\in \partial E^r$.   Let  $F$  be a set with boundary $C^{1,1}$ such that $F\subseteq E^r$, $(0,r)\in \partial F$ and such that there exists $\delta<<r$ for which $\partial F\cap B((0,r),\delta)= \partial (E\cup q_r)\cap B((0,r), \delta)$.   Then $ H^s_{F}(0,r) \leq  -\frac{c}{r^s}$.

 If $E$ were regularly $K$-mean convex, there would exist $c_E>0$ such that  $ H^s_{F}(0,r) \geq  -c_E r$  for every $r\in [0, \eta_E]$. Therefore we would get $-\frac{c}{r^s}\geq -\eta_E r$ for every $r\in [0, \eta_E]$, which is not possible. 
We conclude that $E$   is not regularly $K$-mean convex.  \end{remark} 

 Given a measurable set  $E\subseteq \R^n$ and an open set $\Omega\subseteq\R^n$ we let
 \[
 \Per_K(E,\Omega):=\int_E \int_{\Omega\setminus E} K(x-y)\,dx\,dy + \int_{E\cap\Omega} \int_{\R^n\setminus(\Omega\cup E)} K(x-y)\,dx\,dy\,.
 \]
 Notice that, if $E\subset\Omega$ then $\Per_K(E,\Omega)= \Per_K(E)$, in particular $\Per_K(E,\R^n)= \Per_K(E)$ for all sets $E$.
 
  \begin{definition}[$K$-outward minimizing set and strongly $K$-outward minimizing set] \label{outward}  \ \ \\
  Let $\Omega\subseteq\R^n$ be an open set. 
 $E\subseteq \R^n$ is a {\bf $K$-outward minimizing set} in $\Omega$ if  for every $F\subseteq \R^n$ such that $E\subseteq F$ and $F\setminus E\subset\subset \Omega$  there holds \[\ \Per_K(E,\Omega)\leq  \Per_K(F,\Omega).\]
 
  $E\subseteq \R^n$ is   {\bf   strongly  $K$-outward minimizing set} in $\Omega$ if  there exists $\delta>0$ for which for every $F\subseteq \R^n$ such that $E\subseteq F$ and $F\setminus E\subset\subset \Omega$  there holds \[\ \Per_K(E,\Omega)\leq  \Per_K(F,\Omega)-\delta|F\setminus E|.\] 
  
 To keep track of the constant $\delta$ we will say in the following that $E\subseteq \R^n$ is  strongly  $K$-outward minimizing set with associated constant $\delta$. 
 \end{definition}

We now provide some  equivalent characterizations of $K$-outward minimality and strong $K$-outward minimality,  which imply in particular  the stability 
under $L^1$ convergence of $K$-outward minimizing sets. 

\begin{proposition}\label{conv} 
Let $\Omega\subseteq \R^n$ be a domain. The following assertions are equivalent: 
\begin{enumerate} \item   $E$ is a $K$-outward minimizing set in $\Omega$ (resp. strongly $K$-outward minimizing set with associated constant $\delta>0$).
\item  For every  $G\subseteq\R^n$ such that $G\setminus E\subset\subset \Omega$ there holds that 
\begin{equation}\label{inter}
\Per_K(E\cap G,\Omega)\leq \Per_K(G,\Omega), \qquad(\text{resp. }\Per_K(E\cap G,\Omega)\leq \Per_K(G,\Omega)-\delta|G\setminus E|).\end{equation}
\item  For all $A\subseteq  \Omega\setminus E$,  $A\subset\subset \Omega$ there holds that 
\begin{equation}\label{uno} \int_{A}\int_{E} K(x-y)dxdy\leq \int_A\int_{\R^n\setminus (A\cup E)}K(x-y)dxdy\end{equation} 
\[\left(\text{resp. }\int_{A}\int_{E} K(x-y)dxdy\leq \int_A\int_{\R^n\setminus (A\cup E)}K(x-y)dxdy -\delta |A|\right).\]
\end{enumerate}
In particular, if $E_n $ is a sequence of $K-$outward minimizing sets (resp. strongly $K$-outward minimizing sets  with associated constant $\delta$)  in $\Omega$ such that $E_n\to E$ in $L^1(\Omega)$, then $E$ is a $K$-outward minimizing set in $\Omega$ (resp. a strongly $K$-outward minimizing set with associated constant $\delta$). 
\end{proposition}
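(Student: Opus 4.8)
The plan is to base everything on a single additivity identity for the localized perimeter and then read off the three equivalences by choosing competitors. For disjoint measurable sets write $L(A,B):=\int_A\int_B K(x-y)\,dx\,dy$; by $K\ge 0$ this is symmetric and monotone nondecreasing in each argument. The identity I would prove first is that, for any measurable $S$ and any $A\subset\subset\Omega$ with $|A\cap S|=0$,
\[\Per_K(S\cup A,\Omega)-\Per_K(S,\Omega)=L\big(A,\R^n\setminus(A\cup S)\big)-L(A,S).\]
One proves it by the bookkeeping decomposition $\R^n=S\sqcup A\sqcup(\R^n\setminus(A\cup S))$: expanding $\Per_K(S\cup A,\Omega)$ and $\Per_K(S,\Omega)$ from the definition of the localized perimeter and using $A\subseteq\Omega$, all interaction terms not involving $A$ cancel, leaving exactly the two terms displayed. (Restricting, as one may, to competitors of finite localized perimeter keeps every quantity meaningful.)

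Granting the identity, (1)$\Leftrightarrow$(3) is immediate: $F\mapsto F\setminus E$ and $A\mapsto E\cup A$ are mutually inverse bijections between the enlargements $\{F:\ E\subseteq F,\ F\setminus E\subset\subset\Omega\}$ and the test sets $\{A\subseteq\Omega\setminus E:\ A\subset\subset\Omega\}$, and the identity with $S=E$ converts $\Per_K(E,\Omega)\le\Per_K(F,\Omega)$ (resp. $-\delta|F\setminus E|$) into $L(A,E)\le L(A,\R^n\setminus(A\cup E))$ (resp. $-\delta|A|$), which is (3). For (3)$\Rightarrow$(2), given $G$ with $G\setminus E\subset\subset\Omega$ I set $A:=G\setminus E$ and apply the identity with $S:=E\cap G$ (so that $S\cup A=G$ and $|A\cap S|=0$), obtaining
\[\Per_K(G,\Omega)-\Per_K(E\cap G,\Omega)=L\big(A,\R^n\setminus(A\cup(E\cap G))\big)-L(A,E\cap G).\]
Since $E\cap G\subseteq E$, monotonicity gives $L(A,E\cap G)\le L(A,E)$ and $L\big(A,\R^n\setminus(A\cup E)\big)\le L\big(A,\R^n\setminus(A\cup(E\cap G))\big)$, so (3) bounds the right-hand side below by $\delta|A|=\delta|G\setminus E|$, which is (2). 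Finally (2)$\Rightarrow$(1) follows by taking $G=F$, since then $E\cap G=E$; this closes the cycle.

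For the $L^1$-stability I would use (3) in the equivalent form obtained by moving $L(A,E)$ to the left: since $E\subseteq\R^n\setminus A$, condition (3) reads $2L(A,E)+\delta|A|\le L(A,\R^n\setminus A)$, and the decisive point is that the right-hand side does not depend on $E$. Fix an admissible $A\subseteq\Omega\setminus E$, $A\subset\subset\Omega$, of finite perimeter, and test each $E_n$ against $A_n:=A\setminus E_n$ (admissible for $E_n$). Partitioning $E_n$ by $A$ and $A$ by $E_n$ and rearranging (3) for $E_n$ turns it into
\[2\,L(A_n,E_n\setminus A)+L(A_n,E_n\cap A)+L(A\cap E_n,A^c)+\delta|A_n|\ \le\ L(A,\R^n\setminus A),\]
where every term on the left is nonnegative and the right-hand side is the fixed $E$-independent majorant. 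Along a subsequence $E_n\to E$ a.e.; since $|A\cap E|=0$, the integrands of the first three left-hand terms converge a.e. to those of $L(A,E)$, of $0$, and of $0$, while $|A_n|\to|A|$. Fatou's lemma and superadditivity of the $\liminf$ then give $2L(A,E)+\delta|A|\le\liminf(\text{left-hand side})\le L(A,\R^n\setminus A)$, i.e. (3) holds for $E$; thus $E$ is $K$-outward minimizing (resp. strongly, with the same $\delta$).

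The only real obstacle is the admissible singularity of $K$ at the origin: under $\min\{1,|x|\}K\in L^1$ the kernel need not be integrable near $0$, so individual interactions like $L(A,E)$ may be $+\infty$ and no dominated-convergence argument is available in the limit. The rearrangement in the stability step is engineered precisely to avoid this: isolating the single $E$-independent term $L(A,\R^n\setminus A)$ on the right and keeping every left-hand term nonnegative means one never passes to a limit inside a possibly infinite quantity, so bare Fatou suffices. Similarly, in the identity and in (3)$\Rightarrow$(2) all interactions involve the bounded set $A$, and finiteness is guaranteed by restricting to finite-perimeter competitors. I expect the term-by-term bookkeeping behind the identity and behind the displayed rearrangement to be the most delicate computation, though not a conceptual difficulty.
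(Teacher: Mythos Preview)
Your argument is correct and follows a genuinely different route from the paper. For the equivalences, the paper invokes the submodularity inequality $\Per_K(A,\Omega)+\Per_K(B,\Omega)\ge\Per_K(A\cap B,\Omega)+\Per_K(A\cup B,\Omega)$ to obtain (1)$\Rightarrow$(2), and computes the perimeter difference only in the special case $S=E$ to get (1)$\Leftrightarrow$(3); you instead isolate the single additivity identity $\Per_K(S\cup A,\Omega)-\Per_K(S,\Omega)=L(A,(A\cup S)^c)-L(A,S)$ for general $S$ and read all three equivalences off it via monotonicity of $L$. The paper's route is shorter because it cites submodularity as a black box; yours is self-contained. For the stability, the paper uses characterization (2) together with lower semicontinuity of $\Per_K(\cdot,\Omega)$ under $L^1$ convergence, passing to the limit in $\Per_K(E_n\cap F,\Omega)\le\Per_K(F,\Omega)$. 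Your Fatou argument on the rearranged form $2L(A,E)+\delta|A|\le L(A,\R^n\setminus A)$ of (3) is more elementary---it does not appeal to lower semicontinuity of the localized perimeter---and has the structural advantage that the test set $A_n=A\setminus E_n$ is automatically admissible since $A_n\subseteq A\subset\subset\Omega$, whereas the paper's intermediate claim that $F\setminus E_n\subset\subset\Omega$ for large $n$ requires some care when $E$ itself is not compactly contained in $\Omega$.
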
 

\begin{proof}  
We proof the characterization just for $K$-outward minimizers, since the case of strongly $K$-outward minimizers is completely analogous. 
We recall that for all $A, B\subseteq\R^n$, the following submodularity property holds  \begin{equation}
\label{sub}\Per_K(A,\Omega)+\Per_K(B,\Omega)\geq \Per_K(A\cap B,\Omega)+\Per_K(A\cup B,\Omega),\end{equation} 
see e.g. \cite{cn}. 

If \eqref{inter} holds, then it is immediate to check  Definition \ref{outward}: we fix      $F\supseteq E$,  with $F\setminus E\subset\subset \Omega$ and we apply \eqref{inter} to $G=F$. 
  On the other hand, if $E$ is a $K$-outward minimizing set in $\Omega$ and
  $G$ is such that $G\setminus E\subset\subset \Omega$,  letting $F=G\cup E$ and using the submodularity for the first inequality and  Definition \ref{outward} for the second one,  we get
  \[\Per_K(E,\Omega)+\Per_K(G,\Omega)\geq \Per_K(F,\Omega)+\Per_K(G\cap E,\Omega)\geq \Per_K(E,\Omega)+\Per_K(E\cap G,\Omega).\]
  
We now assume that $E$ is a $K$-outward minimizing set in $\Omega$ and we fix $A\subseteq \Omega\setminus E$, with $A\subset\subset\Omega$. Let $F:=E\cup A$, so  that $E\subseteq F$ and $F\setminus E\subset\subset\Omega$. 
By Definition \ref{outward}  we know that 
\[ 
0\le \Per_K(F,\Omega)-\Per_K(E,\Omega) =  \int_{ A}\int_{\R^n\setminus F}K(x-y)dxdy -  \int_{ A}\int_{E}K(x-y)dxdy, 
 \]  which gives \eqref{uno}. 
On the other hand, if we assume that \eqref{uno} holds and fix $F$ such that $E\subseteq F$ and $A:=F\setminus E\subset\subset \Omega$,
then \eqref{uno} gives
\[
\Per_K(F,\Omega)-\Per_K(E,\Omega) =  \int_{ A}\int_{\R^n\setminus F}K(x-y)dxdy -  \int_{ A}\int_{E}K(x-y)dxdy\geq 0,
\]
which implies that  $E$ is $K$-outward minimizing.

Finally, the stability under $L^1$ convergence is a direct consequence of \eqref{inter} and of  the lower semicontinuity of $\Per_K$. Indeed  fix $F$ such that $E\subseteq F$ and $F\setminus E\subset\subset \Omega$. Since $E_n\to E$ in $L^1(\Omega)$, we get that for $n$ sufficiently large $F\setminus E_n\subset\subset \Omega$. Then   by  the fact that $E_n$ are $K$-outward minimizers in $\Omega$,  
$\Per_K(E_n\cap F,\Omega)\leq  \Per_K(F,\Omega) $, and we conclude by lower semicontinuity of $\Per_K(\cdot,\Omega)$ that 
$\Per_K(E\cap F,\Omega)\leq  \Per_K(F,\Omega) $. 
\end{proof} 
 
\begin{remark}[Hyperplanes and   convex sets]\upshape
  Let $\nu\in \R^n$ with $|\nu|=1$ and define the hyperplane $H=\{x\in \R^n\text{ s.t. } x\cdot \nu\geq 0\}$. Then $H$ is a $K$-outward minimizer in every ball $B(0, R)$ for $R>0$, since $H$ is a  local minimizer of $\Per_K$ in every ball $B(0,R)$, see \cite{pagliari}. 

Moreover, every convex set $C$  is a $K$-outward minimizer in every ball $B(0, R)$ for $R>0$. 
Indeed  $C=\cap_{j\in J} H_j$ with $H_j$ hyperplanes. Let $E$ such that $E\setminus C\subset\subset B(0,R)$. Then $E\setminus H_i\subset\subset B(0,R)$ for every $i\in J$ and by minimality of $H_i$ we get
\[
\Per_K(C\cap E,B(0,R))=\Per_K( \cap_j H_j\cap E,B(0,R))\leq \Per_K( \cap_{j\neq i}H_j\cap E,B(0,R)).\] By repeating the same argument  for every $j\in J$, we conclude $\Per_K(C\cap E,B(0,R))\leq \Per_K(E,B(0,R))$.
\end{remark} 

We now analyze the relation between $K$-outward minimality and $K$-mean convexity for compact sets. 
In some sense, (strong) $K$-outward minimality is the variational analogue of (strong) $K$-mean convexity. 
\begin{theorem}\label{euler}\  \ \ \begin{enumerate} 
\item Let $E\subset \subset \Omega$ be a $K$-outward minimizing set  in $\Omega$. Then $H^K_E(x)\geq 0$  for all $x\in \partial E$. If moreover $E$ is  a strongly  $K$-outward minimizing set with associated constant $\delta>0$ then  $H^K_E(x)\geq \delta>0$, for all $x\in \partial E$.  

\item  Let $E\subset\R^n$ be a bounded set, with boundary of class $C^{1,1}$, strongly $K$-mean convex with associated constant $\delta\geq 0$.  Then   there exists an open set $\Omega$, such that $E\subset\subset \Omega$ and $E$ is  a $K$-outward minimizer in $\Omega$ if $\delta=0$, or it is a  strongly $K$-outward minimizer, with associated constant  $\delta$, if $\delta>0$. 
 \end{enumerate} 
\end{theorem}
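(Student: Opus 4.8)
The two parts require opposite implications, so I treat them separately. For part (1) the plan is to argue by contradiction, combining the equivalent formulation of outward minimality in Proposition \ref{conv} with a one-sided first variation of $\Per_K$. Fix $x\in\partial E$; to verify $H^K_E(x)\ge 0$ (resp. $\ge\delta$) in the viscosity sense I must show $H^K_F(x)\ge 0$ (resp. $\ge\delta$) for every set $F$ with compact $C^{1,1}$ boundary such that $F\subseteq E$ and $x\in\partial F$ (if no such $F$ exists the inequality holds vacuously, as only the interior ball case is checkable). The elementary point is a monotonicity of the ``cost of adding a bump'': for any $A\subseteq\R^n\setminus E$ with $A\subset\subset\Omega$, setting $\Phi_A(S):=\Per_K(S\cup A)-\Per_K(S)=\int_A\int_{\R^n\setminus(S\cup A)}K(z-y)\,dy\,dz-\int_A\int_S K(z-y)\,dy\,dz$, the inclusion $F\subseteq E$ gives $\R^n\setminus(E\cup A)\subseteq\R^n\setminus(F\cup A)$ and $\int_A\int_E K\ge\int_A\int_F K$, hence $\Phi_A(E)\le\Phi_A(F)$.

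Suppose then $H^K_F(x)<\delta$. Since $F$ is $C^{1,1}$, $H^K_F$ is continuous on $\partial F$, so $H^K_F<\delta$ on a neighbourhood of $x$ in $\partial F$. Because $x\in\partial E\cap\partial F$ with $F\subseteq E$, the set $\R^n\setminus E$ accumulates at $x$ from the outer side of $F$ (the outer normals of $F$ and of the interior ball of $E$ at $x$ cannot be opposite, else $x\in\mathrm{int}(E)$), so a thin outer collar of $\partial F$ near $x$, of thickness $\tau$, meets $\R^n\setminus E$ in a set $A_\tau\subseteq\Omega\setminus E$ of positive measure still containing balls of radius comparable to $\tau$. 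The standard first variation of $\Per_K$ along such a collar (the half-space contribution cancels upon integrating across the collar, leaving only the curvature term) yields $\Phi_{A_\tau}(F)\le(H^K_F(x)+o(1))\,|A_\tau|<\delta|A_\tau|$ for $\tau$ small. Together with $\Phi_{A_\tau}(E)\le\Phi_{A_\tau}(F)$ this contradicts the (strong) outward minimality of $E$, which forces $\Phi_{A_\tau}(E)\ge 0$ (resp. $\ge\delta|A_\tau|$). Hence $H^K_F(x)\ge 0$ (resp. $\ge\delta$), proving (1).

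For part (2) the plan is a calibration argument based on foliating the collar $\{0<d(\cdot,E)\le\xi_E\}$ by the level sets $\partial E^\lambda$, each of $K$-curvature $\ge\delta$ by hypothesis. After shrinking $\xi_E$ below the reach of $\partial E$ so that every $E^\lambda$, $\lambda\in[0,\xi_E]$, is $C^{1,1}$, I set $\Omega:=\mathrm{int}(E^{\xi_E})$, so that $E\subset\subset\Omega$ and every admissible $A\subseteq\Omega\setminus E$ with $A\subset\subset\Omega$ satisfies $\lambda(z):=d(z,E)<\xi_E$ on $A$. By Proposition \ref{conv} it suffices to show $\Per_K(E\cup A)-\Per_K(E)=\int_A g_A(z)\,dz\ge\delta|A|$, where $g_A(z)=\int_{\R^n\setminus(E\cup A)}K(z-y)\,dy-\int_E K(z-y)\,dy$. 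For each $z\in A$ one has $z\in\partial E^{\lambda(z)}$ and $H^K_{E^{\lambda(z)}}(z)\ge\delta$, and comparing the two integrands region by region gives the pointwise identity
\[ g_A(z)-H^K_{E^{\lambda(z)}}(z)=2\int_{(E^{\lambda(z)}\setminus E)\setminus A}K(z-y)\,dy+\int_{\{y\in A:\,d(y,E)\le\lambda(z)\}}K(z-y)\,dy-\int_{\{y\in A:\,d(y,E)>\lambda(z)\}}K(z-y)\,dy. \]
Integrating over $z\in A$, the first term is nonnegative, while the last two combine into $\int_{A\times A}\big(\chi_{\{d(y,E)\le d(z,E)\}}-\chi_{\{d(y,E)>d(z,E)\}}\big)K(z-y)$, which vanishes: under the involution $(z,y)\mapsto(y,z)$ the even kernel $K$ is invariant while the sign factor changes sign off the null set $\{d(y,E)=d(z,E)\}$, and the symmetric truncation $|z-y|>\epsilon$ needed to define the singular integral makes each truncated integral exactly $0$. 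Thus $\int_A g_A\ge\int_A H^K_{E^{\lambda(z)}}(z)\,dz\ge\delta|A|$, i.e. $E$ is (strongly, when $\delta>0$) $K$-outward minimizing in $\Omega$.

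I expect the main obstacle to lie in the singular-integral bookkeeping supporting both parts. In (1) the delicate step is constructing an admissible collar inside $\R^n\setminus E$ that is thin yet locally fat, and justifying that the nonlocal first variation along it is governed by $H^K_F(x)$ (the half-space part cancelling across the collar rather than producing a divergent positive term). In (2) the crux is the vanishing of the self-interaction term, where two individually divergent integrals cancel, so the argument must proceed through the symmetric principal-value truncation rather than pointwise. Throughout, the regularity input that $E^\lambda$ is $C^{1,1}$ for $\lambda\le\xi_E$ — guaranteeing that the curvatures are classical and that the principal values converge uniformly (so dominated convergence applies when passing to the limit in the truncation) — will be used repeatedly.
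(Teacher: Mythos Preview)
Your Part (2) is essentially the paper's argument made explicit: the paper simply invokes \cite[Proposition 5.2]{MR3401008} to get $\Per_K(F)\ge\Per_K(E)+\int_{F\setminus E}H^K_{E^{\lambda(x)}}(x)\,dx\ge\Per_K(E)+\delta|F\setminus E|$, whereas you reprove this via the pointwise identity and the antisymmetry cancellation. The truncation device you outline is the right way to make the cancellation rigorous, so this part is fine.

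Part (1), however, has a genuine gap. Your key inequality $\Phi_{A_\tau}(F)\le (H^K_F(x)+o(1))|A_\tau|$ goes the wrong way. Indeed, run your own Part (2) computation with $F$ in place of $E$: for any $A$ in the outer collar of $F$ one obtains
\[
\Phi_A(F)=\int_A H^K_{F^{\lambda(z)}}(z)\,dz \;+\; 2\int_A\!\int_{(F^{\lambda(z)}\setminus F)\setminus A}K(z-y)\,dy\,dz,
\]
and the second term is \emph{nonnegative}. So the calibration yields $\Phi_{A_\tau}(F)\ge (H^K_F(x)+o(1))|A_\tau|$, not $\le$. The ``half-space cancellation across the collar'' that you invoke gives an \emph{equality} only for the full normal collar; once you intersect with $\R^n\setminus E$ the missing piece $(F^{\lambda(z)}\setminus F)\setminus A_\tau$ contains the cusp $E\setminus F$ near $x$, and its contribution is a nonnegative excess, not a deficit. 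Combining $\Phi_{A_\tau}(E)\le\Phi_{A_\tau}(F)$ with a \emph{lower} bound on $\Phi_{A_\tau}(F)$ produces no contradiction with $\Phi_{A_\tau}(E)\ge\delta|A_\tau|$.

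The paper sidesteps this by using characterization (2) of Proposition \ref{conv} rather than (3). It deforms $F$ outward to $\Phi_\eps(F)$ and writes, via \cite[Proposition 5.2]{MR3401008}, both $\Per_K(\Phi_\eps(F))=\Per_K(F)+\int_{\Phi_\eps(F)\setminus F}H^K_{\Phi_{\eps(x)}(F)}$ and $\Per_K(E\cap\Phi_\eps(F))\ge\Per_K(F)+\int_{(E\cap\Phi_\eps(F))\setminus F}H^K_{\Phi_{\eps(x)}(F)}$; subtracting gives $\Per_K(E\cap\Phi_\eps(F))\ge\Per_K(\Phi_\eps(F))-\int_{\Phi_\eps(F)\setminus E}H^K_{\Phi_{\eps(x)}(F)}>\Per_K(\Phi_\eps(F))-\delta|\Phi_\eps(F)\setminus E|$, which directly contradicts \eqref{inter} with $G=\Phi_\eps(F)$. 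The point is that testing with the \emph{whole} deformed set $\Phi_\eps(F)$ (allowed by characterization (2)) keeps the first-variation identity exact, so no restriction to $\R^n\setminus E$ --- and hence no uncontrolled excess term --- ever enters.
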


\begin{proof}
\noindent
\begin{enumerate}\item  
For the case of fractional perimeters, this result has been proved proved in  \cite[Proposition 5.1]{MR2675483}.  Let $\delta\geq 0$. If $E$ is a $K$-outward minimizer, we choose $\delta=0$, if $E$ is a strongly $K$-outward minimizer, we choose $\delta>0$ to be the constant associated to $E$ according to Definition \ref{outward}. We proceed by contradiction and we assume there exists $x_0\in \partial E$, $F\subseteq E$ with $\partial F\in C^{1,1}$, $x_0\in \partial E \cap \partial F$, and $H_F^K(x_0)\leq \delta -2\rho<\delta$ for some $\rho>0$.  Then by continuity of $H^K$ there exists $r>0$ such that $H_F^K(x)\leq \delta-\rho$ for every $x\in\partial E\cap B(x_0,r)$.  We construct a $1$-parameter family $\Phi_\eps$ of $C^{1,1}$ diffeomorphisms, such that $F=\Phi_0(F)\subseteq \Phi_\eps(F)\subset\Omega$ and  $\Phi_\eps(F)\setminus F\subset\subset  B(x_0, r)\subset\subset\Omega$ for every $\eps\in (0, \eps_0)$. Again by continuity there holds   $H^K_{\Phi_\eps(F)}(x)\leq \delta-\rho/2$ for all $x\in \partial  \Phi_\eps(F)\setminus F$. Using the fact that $H^K$ is the first variation of $\Per_K$ with respect to $C^{1,1}$ diffeomorphisms, we get, see    \cite[Proposition 5.2]{MR3401008},
\begin{equation}\label{un}\Per_K(\Phi_\eps(F))=\Per_K(F)+ \int_{ \Phi_\eps(F)\setminus F} H^K_{\Phi_{\eps(x)}(F)}(x)dx\end{equation} where $\eps(x):= \sup\{\lambda\in (0, \eps)\ x\in \Phi_{\lambda}(F)\}$  and
\begin{equation}\label{do}\Per_K(E\cap \Phi_\eps(F)\geq \Per_K(F)+\int_{(E\cap \Phi_\eps(F))\setminus F} H^K_{\Phi_{\eps(x)}(F)}(x)dx.\end{equation}
From \eqref{un}, \eqref{do},  recalling that $H^K_{\Phi_\eps(F)}(x)\leq \delta-\rho/2$ in $\Phi_\eps(F)\setminus E\subseteq \Phi_\eps(F)\setminus F$,  we conclude that \begin{eqnarray*}  \Per_K(E\cap \Phi_\eps(F))&\geq & \Per_K(\Phi_\eps(F))-  \int_{ \Phi_\eps(F)\setminus E}H^K_{\Phi_{\eps(x)}(F)}(x)dx\\ &\geq &  \Per_K(\Phi_\eps(F))
+\left(-\delta+\frac{\rho}{2}\right) |\Phi_\eps(F)\setminus E|\\&> &  \Per_K(\Phi_\eps(F))
-\delta  |\Phi_\eps(F)\setminus E|
. \end{eqnarray*} in contradiction with the fact that   $E$ is a $K$-outward minimizing set in $\Omega$ if $\delta=0$ or a strong $K$-outward minimizing set if $\delta>0$.
\item 
We let   $\Omega:=E^{\xi_E}$, so $E\subset\subset \Omega$ and by \cite[Proposition 5.2]{MR3401008}, for every $F$ with $\Per_K(F)<+\infty$  such that $E\subset F\subset E^{\xi_E}$
 there holds 
 \[\Per_K(F)\geq \Per_K(E)+\int_{F\setminus E} H^K_{\{y\text{ s.t. } d_E(y)\geq  d_E(x)\}}(x)dx\geq \Per_K(E)+\delta|F\setminus E|, \] where the last inequality comes from the fact that    for $x\in F\setminus E$, there holds that $-\xi_E<d_E(x)<0$ and by recalling that $H^K_{E^\lambda}(x)\geq\delta$ for all $\lambda\in [0,\eta']$.  This implies that $E$ is $K$-outward minimizing in $\Omega$  with associated constant $\delta$.
\end{enumerate} 
\end{proof} 

\begin{remark} 
We point out that  $K$-mean convexity does not imply $K$-outward minimality. In particular  if $E$ is a set such that $H^K_E(x)\geq \delta>0$ for all $x\in \partial E$, but $\partial E\not\in C^{1,1}$, then  it is not always true that there exists $\Omega\supset E$ such that $E$ is a $K$-outward minimizing set in   $\Omega$.  We consider the example described in Remark \ref{counter} of a set $E\in \R^2$ which satisfies $H^s_E(x)\geq \frac{2\pi}{(1+\sqrt{2})^2}$ 
for all $x\in \partial E$ and which is not $K$-outward minimizing.  

We recall, see Remark \ref{counter},  that 
\[H^s_{E\cup Q_r}(t, r), H^s_{E\cup Q_r}(t,- r)\leq -\frac{c(n)}{r^s}\qquad \text{ for all $t\in (-r,r)$},\] 
where  $Q_r=\{(x_1,x_2)\in \R^2\ \text{ s.t. }x_2\in [-r,r], \ -|x_2|\leq x_1\leq |x_2|\}$.  
Then, arguing  exactly as in \cite[Proposition 1.8]{fattening}  it is possible to show that $\Per_s(E\cup Q_r)<\Per_s(E)$, 
which implies that $E$ is not a $K$-outward minimizing set. 
  \end{remark} 
 
\section{Level set formulation}\label{sezionelevel} 
 
In this section we recall the level set formulation of the geometric flow \eqref{kflow} in the setting of viscosity solutions for nonlocal equations,
and we collect some results  that will be useful in the sequel.

The viscosity theory for the classical  mean curvature flow is
contained in~\cite{MR1100211,MR1100206}, see also~\cite{MR2238463}
for a comprehensive presentation of the level set approach for classical geometric
flows. The existence and uniqueness of solutions for the fractional curvature flow in~\eqref{kflow} in the
viscosity sense have been investigated  in~\cite{imbert} by
introducing  the level set formulation of the geometric evolution problem~\eqref{kflow} and 
a proper notion of viscosity solution.
The paper   \cite{MR3401008}  is the  main reference where it is introduced 
 a general framework for the analysis via the level set formulation of a 
wide class of local and nonlocal translation-invariant geometric flows.
 
The level set flow associated to~\eqref{kflow} can be defined as follows. 
Given a  closed  set $E\subseteq \R^n$ 
we choose a   Lipschitz continuous function 
$u_E:\R^n\to \R$ such that 
\begin{eqnarray}\nonumber &&
\partial E=\{x\in\R^n  \text{ s.t. } u_E(x)=0\}=\partial\{x\in\R^n\text{ s.t. }  u_E(x)\geq 0\}\\ \label{u}{\mbox{and }} && E=\{x\in\R^n\text{ s.t. } u_E(x)\geq 0\},
\end{eqnarray} e.g. $u_E(x)=d_E(x)$. 
Let also~$u_E(x,t)$ be the viscosity solution of the following nonlocal parabolic problem
\begin{equation}\label{levelset}
\begin{cases}
\partial_t u(x,t)+|Du(x,t)| H^K_{\{y\text{ s.t.} u(y,t)\geq u(x,t)\}}(x)=0,\\
u(x,0)= u_E(x).
\end{cases} 
\end{equation}  For the definition of viscosity solution we refer to  \cite{MR3401008}, see also \cite{imbert}. 
We observe that  the inequality  $H_E^K(x)\leq c$ (resp. $\geq c)$  for $x\in \partial E$ can be shown to be equivalent to 
$H^K_{\{y\text{ s.t.} u_E(y)\geq 0\}}(x)\leq c$ (resp. $\geq c)$ for $x$ with $u_E(x)=0$, in the viscosity sense. 

Due to the comparison principle  proved in full generality  in \cite{MR3401008} the system \eqref{levelset} admits a unique viscosity solution for every initial datum $u_E$ which is    uniformly continuous. Moreover
if $u_E$ is Lipschitz continuous, the solution is still Lipschitz continuous in $x$ with the same Lipschitz constant.

\begin{remark}[Outer and Inner flow]\label{ineout}\upshape 
We define the outer and inner flows defined as follows:
\begin{equation}
\label{outin} E^+(t):= \{x\in\R^n\text{ s.t. }  u_E(x,t)\geq 0\} \qquad
{\mbox{and}} \qquad E^-(t):= \{x\in\R^n\text{ s.t. }  u_E(x,t)>0\}
\end{equation}  where $u_E(x,t)$ is the unique viscosity solution to \eqref{levelset} with initial data $u_E$ as defined in \eqref{u}.
The level set flow of $\partial E$ is given by 
\begin{equation}\label{sigmaet} \Sigma_E(t):=\{x\in\R^n\text{ s.t. }  u_E(x,t)=0\}. \end{equation} 
We observe that   since 
the equation in \eqref{levelset} is geometric, if we replace the initial condition $u_E$ with any function $u_0$ with the same level sets $\{u_0 \geq 0\}$ and $\{ u_0 > 0 \}$, the evolutions $\Sigma_E(t)$,  $E^+(t)$ and $E^-(t)$ remain the same.
For more details, we refer to \cites{imbert, MR3401008}.

Finally we observe that, if $\text{ int }E=\emptyset$, then $u_E(x)\leq 0$ for every $x\in \R^n$, by \eqref{u}. Therefore, by the comparison principle  proved   in \cite{MR3401008} we get that
$u_E(x, t)\leq 0$ for every $t>0$. In particular this implies that 
\begin{equation}\label{empty} \text{ if $E$ has empty interior then  $E^-(t)=\emptyset$ for all $t\ge0$.}
\end{equation}
\end{remark} 
 
Finally we recall some results about comparison between the level set flow and geometric regular subsolutions and supersolutions  to  \eqref{kflow}, which have been proven in  \cite[Appendix]{fattening} (see also \cite{MR3401008}). 

We start with a geometric comparison principle proven in  \cite[Corollary A8]{fattening}.

\begin{proposition} \label{cpgeometric}$\,$ \begin{itemize}\item[i)] 
Let $F\subset E$ two closed sets  in $\R^n$ such that  
$d(F,E)=\delta>0$. Then $F^{+}(t)\subset E^{-}(t)$ for all $t>0$,
and the map  $t\to d(F^{+}(t), E^{-}(t))$ is
nondecreasing. 
\item[ii)] Let $v:\R^n\times[0, T)\to \R$ be a bounded uniformly continuous
viscosity supersolution to \eqref{levelset}, and assume that $
F\subseteq \{x\in\R^n\text{ s.t. } v(x,0)\geq 0\}.$
Then $$F^+(t)\subseteq \{x\in\R^n\text{ s.t. } v(x,t)\geq 0\},\qquad \text{ for all $t\in (0, T)$. }$$

Moreover, if $d(F, \;\{x\in\R^n\text{ s.t. } v(x,0)> 0\})=\delta>0,$ then $$
F^+(t)\subseteq \{x\in\R^n\text{ s.t. } v(x,t)>0\},\qquad \text{ for all $t\in (0, T)$,}$$ and 
$$d\Big(F^{+}(t),   \{x\in\R^n\text{ s.t. } v(x,t)>0\}\Big)\geq \delta .$$ 
\item[iii)]  Let $w:\R^n\times[0, T)\to \R$
be a bounded  uniformly continuous viscosity subsolution to \eqref{levelset},
and assume that $E\supseteq \{x\in\R^n\text{ s.t. } w(x,0)\geq 0\}).$
Then $$E^+(t)\supseteq \{x\in\R^n\text{ s.t. } w(x,t)\geq 0\},\qquad\text{ for all $t\in (0, T)$. }$$

Moreover, if $d(E,\{x\in\R^n\text{ s.t. } w(x,0)\geq 0\})=\delta>0,$
then $$E^{-}(t)\supseteq \{x\in\R^n\text{ s.t. } w(x,t)\geq 0\},\qquad\text{ for all~$t
\in(0,T)$,}$$ and 
$$d\Big(E^{-}(t), \;\{x\in\R^n\text{ s.t. } w(x,t)\geq 0\}\Big)\geq \delta.$$ 
\end{itemize} 
\end{proposition}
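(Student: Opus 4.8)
The plan is to reduce all three statements to the comparison principle for the level set equation \eqref{levelset} proved in \cite{MR3401008}, exploiting two invariances of that equation: it is invariant under adding a constant to $u$, since $\{u+c\ge u(x)+c\}=\{u\ge u(x)\}$, so that if $u$ solves \eqref{levelset} then so does $u-\delta$; and it is geometric, so that the flows $E^+(t), E^-(t), \Sigma_E(t)$ depend only on the super- and strict-super-level sets of the initial datum, and solutions retain their spatial Lipschitz constant.

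For (i) I would take as defining functions the signed distances $u_F=d_F$ and $u_E=d_E$, which are $1$-Lipschitz and, by \eqref{u}, generate $F^+(t)=\{u_F(\cdot,t)\ge0\}$ and $E^-(t)=\{u_E(\cdot,t)>0\}$. The geometric input is the elementary inequality $d_F\le d_E-\delta$ on $\R^n$, which follows from $F\subseteq E$ together with $d(F,E)=\delta$ by a short case analysis on the position of $x$ relative to $\partial F$ and $\partial E$. Since $u_E(\cdot,t)-\delta$ is again a solution of \eqref{levelset} by the constant-shift invariance, and $u_F(\cdot,0)\le u_E(\cdot,0)-\delta$, the comparison principle yields $u_F(\cdot,t)\le u_E(\cdot,t)-\delta$ for all $t$. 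Hence $F^+(t)=\{u_F(\cdot,t)\ge0\}\subseteq\{u_E(\cdot,t)\ge\delta\}\subseteq E^-(t)$, with the functional gap $u_E(\cdot,t)-u_F(\cdot,t)\ge\delta$ persisting in time.

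Parts (ii) and (iii) are dual, so I describe (ii). The non-strict inclusion follows from the strict one by an approximation: replacing $v$ by $v+\eps$, still a supersolution by the constant-shift invariance, one has $F\subseteq\{v(\cdot,0)\ge0\}\subseteq\{(v+\eps)(\cdot,0)>0\}$, and if $F$ is compact this strict containment carries a positive separation. Applying the strict statement to $v+\eps$ gives $F^+(t)\subseteq\{v(\cdot,t)>-\eps\}$, and letting $\eps\searrow0$ yields $F^+(t)\subseteq\{v(\cdot,t)\ge0\}$. For the strict statement itself, with $d(F,\{v(\cdot,0)>0\})=\delta$, I would again take $u_F=d_F$ and compare the solution $u_F(\cdot,t)$ against $v$: after relabeling $v$ by a monotone rescaling (using the geometric invariance) so that at the initial time the defining function of $F$ sits below $v(\cdot,0)-\delta$, the comparison principle propagates the ordering, and the $1$-Lipschitz bound preserved by the flow converts the functional gap into the set distance $d(F^+(t),\{v(\cdot,t)>0\})\ge\delta$. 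Part (iii) is identical with the roles of sub/supersolution and of $\{v\ge0\}$ versus $\{w\ge0\}$ interchanged, using $w-\eps$ in place of $v+\eps$.

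Finally, the monotonicity of $t\mapsto d(F^+(t),E^-(t))$ in (i), and of the analogous distances in (ii), (iii), I would obtain by a restart argument: by the semigroup property of the level set flow, at any time $s>0$ the closed sets $F^+(s)\subset E^-(s)$ are separated by $d(s):=d(F^+(s),E^-(s))>0$, so applying (i) with initial datum at time $s$ gives $d(t)\ge d(s)$ for all $t\ge s$. The step I expect to be the main obstacle is the \emph{relabeling} in the strict parts of (ii) and (iii): the hypotheses are stated at the level of sets, whereas the comparison principle requires an ordering of the full defining functions, and because the equation is only geometric one must choose auxiliary functions that respect simultaneously the $\{\ge0\}$ and $\{>0\}$ level sets, so that $E^+$ and $E^-$ are not altered, while lying on the correct side of $v$ even at points where $v$ vanishes inside $F$. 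Converting the resulting functional separation into the quantitative set distance, through the preserved Lipschitz constant, is the other delicate point.
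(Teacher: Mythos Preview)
The paper does not give its own proof of this proposition; it is quoted from \cite[Corollary~A.8]{fattening}. Your plan---reduce to the comparison principle of \cite{MR3401008} through the constant-shift and geometric invariances of \eqref{levelset}, use signed distances as initial data, and convert the resulting functional gap into a set distance via the preserved $1$-Lipschitz bound---is the standard route and is essentially what that appendix does. For part~(i) the outline is correct: the elementary inequality $d_F\le d_E-\delta$ together with comparison gives $u_F(\cdot,t)\le u_E(\cdot,t)-\delta$, and $1$-Lipschitzness of $u_E(\cdot,t)$ turns this into $d(F^+(t),E^-(t))\ge\delta$; monotonicity then follows from the restart you describe, after replacing the defining functions at time $s$ by signed distances and invoking geometric invariance.

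For (ii)--(iii) you have correctly isolated the relabeling as the genuine obstacle. One gap to close: in deducing the non-strict inclusion from the strict one you write ``if $F$ is compact this strict containment carries a positive separation,'' but compactness of $F$ is not among the hypotheses. Without it, $F\subset\{v(\cdot,0)>-\eps\}$ need not have positive boundary distance and your reduction breaks down; the non-strict case must be argued directly, by choosing a bounded defining function for $F$ dominated pointwise by $v(\cdot,0)$ (here the assumed boundedness of $v$ is used). For the strict case, the cleaner device in the cited reference is spatial translation invariance of \eqref{levelset} (translate $F$ by every $|z|<\delta$ and apply the non-strict conclusion) rather than a monotone relabeling of $v$, which sidesteps the difficulty you flag about matching the $\{\ge0\}$ and $\{>0\}$ level sets simultaneously.
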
 
  
We now state a comparison result between the level set flow and
geometric subsolutions or supersolutions to \eqref{kflow}. We omit its proof since it follows exactly 
as in  \cite[Proposition A.10]{fattening}.

\begin{proposition}\label{subgeometrico}
Let $C(t)\subseteq \R^n$ for $t\in [0,T]$, be a continuous family of closed sets with compact boundaries,
and let $E\subseteq\R^n$ be a closed set.


\begin{itemize}\item[i)]
Assume  that  $C(t)$ satisfies a uniform  interior ball condition at every point of its boundary, and that there exists $\delta>0$ such that at every~$
x\in \partial C(t)$  there holds 
\begin{equation}\label{supergeo} 
\partial_t x\cdot \nu(x)+H^K_{C(t)}(x)\geq \delta.\end{equation}
If~$E  \subseteq C(0)$, with $d(E, C(0))=k\geq 0$, then  $E^+(t)\subseteq C(t)$ for all $ t\in [0,T]$, with  $d(E^+(t), C(t))\geq k$.
\item[ii)]   Assume  that  $C(t)$ satisfies a uniform exterior ball condition at every point of its boundary,  and  that there exists $\delta>0$ such that
at every $x\in \partial C(t)$ there holds 
\begin{equation}\label{subgeo} \partial_t x\cdot \nu(x)+H^K_{C(t)}(x)\leq-\delta.
\end{equation}
If  $E \supseteq C(0)$,  then $E^+(t)\supseteq C(t)$ for all $ t\in[0, T]$. \\
If $d(C(0), \{x\in\R^n\text{ s.t. }u_E(x)> 0\})=k>0$, then  $E^-(t)\supseteq C(t)$ for all $ t\in[0,T]$, with  $d(E^-(t), C(t))\geq k$.
\end {itemize} 
\end{proposition}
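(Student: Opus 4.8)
The plan is to deduce the statement from the geometric comparison principle of Proposition~\ref{cpgeometric}, by upgrading the differential inequalities \eqref{supergeo} and \eqref{subgeo} into the assertion that $C(t)$ is (the $0$-superlevel set of) a viscosity super- or subsolution of the level set equation~\eqref{levelset}. I describe the argument for part~i); part~ii) is obtained by the symmetric construction, exchanging the roles of super- and subsolutions, of interior and exterior ball conditions, and reversing every inclusion.

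First I would represent the family $C(t)$ through its signed distance. Let $d_{C(t)}$ denote the signed distance to $C(t)$ (positive inside, as in the definition of $d_E$), and set $w(x,t):=\psi\big(d_{C(t)}(x)\big)$, where $\psi:\R\to\R$ is a fixed smooth nondecreasing function with $\psi(0)=0$, $\psi'>0$ on a neighbourhood of $0$, and $\psi$ constant outside a slightly larger neighbourhood. The composition with $\psi$ serves only to make $w$ bounded and uniformly continuous on $\R^n\times[0,T]$, as required by Proposition~\ref{cpgeometric}; since \eqref{levelset} is geometric, it does not affect the $0$-superlevel sets, so $\{w(\cdot,t)\geq 0\}=C(t)$ and $\{w(\cdot,t)>0\}=\text{int}\,C(t)$. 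Uniform continuity follows from the assumed continuity of $t\mapsto C(t)$ and the $1$-Lipschitz dependence of $d_{C(t)}$ on $x$.

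Next I would check that $w$ is a viscosity supersolution of \eqref{levelset}. The core is the computation at a boundary point: if a smooth test function touches $w$ from below at $(x_0,t_0)$ with $x_0\in\partial C(t_0)$, then the zero level set of $w$ there is $\partial C(t_0)$, with outer normal $\nu(x_0)$ and $\{y:w(y,t_0)\geq w(x_0,t_0)\}=C(t_0)$, so that the nonlocal term in \eqref{levelset} is exactly $H^K_{C(t_0)}(x_0)$. Using the kinematic identity $\partial_t w=|Dw|\,(\partial_t x\cdot\nu)$ and \eqref{supergeo} one gets
\[
\partial_t w+|Dw|\,H^K_{C(t_0)}(x_0)=|Dw|\big(\partial_t x\cdot\nu+H^K_{C(t_0)}(x_0)\big)\geq \delta\,|Dw|\geq 0,
\]
so $w$ is in fact a strict supersolution at the zero level. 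Finally, from $E\subseteq C(0)=\{w(\cdot,0)\geq 0\}$ Proposition~\ref{cpgeometric}~ii) yields $E^+(t)\subseteq\{w(\cdot,t)\geq 0\}=C(t)$, and the quantitative refinement $d(E^+(t),C(t))\geq k$ follows from the distance part of the same statement, since $d(E,C(0))=k$. For part~ii) the analogous $w$ built from \eqref{subgeo} is a subsolution (here the exterior ball condition is what licenses the viscosity inequality $H^K_{C(t)}(x)\leq\cdots$), and Proposition~\ref{cpgeometric}~iii) gives $E^+(t)\supseteq C(t)$, together with $E^-(t)\supseteq C(t)$ and $d(E^-(t),C(t))\geq k$ under the gap hypothesis $d(C(0),\{u_E>0\})=k$.

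I expect the main obstacle to be the verification of the viscosity super/subsolution property away from the zero level set and at boundary points where $\partial C(t)$ is only one-sidedly regular, the difficulty being compounded by the nonlocality of $H^K$: the curvature cannot be read off from the local geometry near the contact point, but couples it to the whole set. The uniform interior ball condition is decisive precisely here, because for a supersolution the admissible test functions touch $w$ from below, that is, from the interior side, and the uniform ball furnishes at every point and uniformly in $t$ a supporting set of controlled curvature; combined with the monotonicity of $H^K$ under inclusion recorded after Definition~\ref{viscosity} and the continuity of $H^K$, this lets one propagate \eqref{supergeo} to the nearby inner parallel surfaces with a slightly smaller but still positive slack, so that $w$ remains a supersolution throughout the thin tubular neighbourhood where it is non-constant. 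The exterior ball condition plays the mirror role in part~ii).
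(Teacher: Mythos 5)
Your proposal is correct and is essentially the proof the paper itself points to: the authors omit the argument and refer to \cite[Proposition A.10]{fattening}, whose proof is precisely your construction, namely turning the (truncated) signed distance of the family $C(t)$ into a bounded, uniformly continuous viscosity super/subsolution of \eqref{levelset} — with the uniform ball condition and the slack $\delta$ used, via translation invariance and monotonicity of $H^K$, to control the nearby parallel level sets — and then concluding by Proposition \ref{cpgeometric}. No substantive deviation or gap to report.
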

 
\section{$K$-flow of  $K$-mean-convex sets}\label{sezionekmean} 

In this section we discuss some properties of the $K$-flow \eqref{kflow} starting from a regularly or strongly $K$-mean convex set.  
We first show that  the flow is monotone in the following sense. 

\begin{proposition}\label{chara}  \ \ \ 
\begin{enumerate}\item Let   $E\subseteq \R^n$ be a  strongly $K$-mean convex with associated constant $\delta>0$. 
\\
If $\text{int }E=\emptyset$ then   $E^-(t)=\emptyset$    and $\text{int }E^+(t)=\emptyset$  for every $t\ge 0$,  whereas if $\text{int }E\neq \emptyset$, there holds  
    \begin{equation}\label{concl3new}  E^+(t+s)\subseteq  E^-(t)\text{ with }d(E^+(t+s), E^-(t))\geq \delta s \qquad \text{  for every  $t\geq 0, s\in [0, \xi_E/\delta)$}\end{equation}  where $E^-(0)=\text{int }E$. 
In particular    $E^{+}(t)\setminus E^-(t)$ has empty interior for all $t>0$.
 \item Let $E\subseteq \R^n$ be a   regularly $K$-mean convex. Then there holds 
 \begin{equation}\label{concl1new} E^+(t)\subseteq  E  \text{  and   }  E^+(t+s)\subseteq  E^+(t)\qquad \text{  for every  $t,s\geq 0$.}
\end{equation}
\end{enumerate} 
 \end{proposition}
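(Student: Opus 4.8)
The plan is to trap the outer flow $E^+(t)$ inside barriers built from the enlargements $E^\lambda$ of \eqref{ingr} and to feed these into the comparison Proposition \ref{subgeometrico}(i). The key geometric facts are that $E^\lambda$ satisfies an interior ball condition of radius $\geq \lambda$ and that its boundary, when we let $\lambda$ vary in time, moves with outer normal velocity equal to the rate of change of the enlargement parameter. I would prove (2) first, since it also disposes of the empty–interior case of (1). For regular $K$-mean convexity I take the barrier $C(\tau):=E^{\psi(\tau)}$, where $\psi$ solves $\psi'=c_E\psi+\delta_0$ with $\psi(0)=\eta$ and $\eta,\delta_0>0$ small. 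Then $C(\tau)$ has interior ball radius $\geq \psi(\tau)\geq \eta$, its boundary moves with velocity $\psi'(\tau)$, and regular $K$-mean convexity gives $H^K_{C(\tau)}\geq -c_E\psi(\tau)$ on $\partial C(\tau)$, so $\partial_\tau x\cdot\nu+H^K_{C(\tau)}(x)\geq \psi'-c_E\psi=\delta_0>0$: a strict supersolution. Proposition \ref{subgeometrico}(i) yields $E^+(\tau)\subseteq E^{\psi(\tau)}$, and sending $\eta\to0$ and then $\delta_0\to0$ forces $\psi(\tau)\to0$ for each fixed $\tau$, so $E^+(\tau)\subseteq\bigcap_{\lambda>0}E^\lambda=E$. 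This is the first half of \eqref{concl1new}; the monotonicity $E^+(t+s)\subseteq E^+(t)$ then follows from $E^+(s)\subseteq E$, the semigroup property, and comparison, via $E^+(t+s)=(E^+(s))^+(t)\subseteq E^+(t)$.

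For (1) with $\text{int}\,E=\emptyset$, \eqref{empty} gives $E^-(t)=\emptyset$, and since strong $K$-mean convexity implies regular $K$-mean convexity (Remark \ref{regularset}), part (2) gives $E^+(t)\subseteq E$, hence $\text{int}\,E^+(t)=\emptyset$. When $\text{int}\,E\neq\emptyset$ I would first establish the case $t=0$ of \eqref{concl3new}. Fix $s\in(0,\xi_E/\delta)$ and, for $\epsilon\in(0,\delta)$, run the shrinking barrier $C^\epsilon(\tau):=E^{\delta s-(\delta-\epsilon)\tau}$ on $[0,s]$. Its enlargement parameter stays in $[\epsilon s,\delta s]\subseteq(0,\xi_E]$, so strong $K$-mean convexity gives $H^K\geq\delta$ there, while the boundary velocity is $-(\delta-\epsilon)$; hence $\partial_\tau x\cdot\nu+H^K\geq\epsilon>0$, a strict supersolution with interior ball radius $\geq\epsilon s$. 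Since $E\subseteq C^\epsilon(0)=E^{\delta s}$ with $d(E,E^{\delta s})=\delta s$, Proposition \ref{subgeometrico}(i) gives $E^+(s)\subseteq E^{\epsilon s}$ with $d(E^+(s),E^{\epsilon s})\geq\delta s$. Reading this through the distance function (a point of $\partial E^+(s)$ lying at distance $\geq\delta s$ inward from the level $\{d_E=-\epsilon s\}$ has $d_E\geq(\delta-\epsilon)s$) and letting $\epsilon\to0$ yields $E^+(s)\subseteq\{d_E\geq\delta s\}\subseteq E^-(0)$ with $d(E^+(s),E^-(0))\geq\delta s$.

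To reach arbitrary $t$ I would apply the geometric comparison Proposition \ref{cpgeometric}(i) to the closed sets $E^+(s)\subset E$, which satisfy $d(E^+(s),E)\geq\delta s>0$: this gives $(E^+(s))^+(\tau)\subset E^-(\tau)$ with $\tau\mapsto d\big((E^+(s))^+(\tau),E^-(\tau)\big)$ nondecreasing, hence $\geq\delta s$. By the semigroup property $(E^+(s))^+(\tau)=E^+(\tau+s)$, so $E^+(t+s)\subseteq E^-(t)$ with $d(E^+(t+s),E^-(t))\geq\delta s$, which is \eqref{concl3new}. The concluding claim that $E^+(t)\setminus E^-(t)=\{u_E(\cdot,t)=0\}$ has empty interior follows from the strict gap: a ball $B$ in this set would, by \eqref{concl3new} applied at base time $t-s$, lie in $E^-(t-s)$ at distance $\geq\delta s$ from $\partial E^-(t-s)$, and comparing with a slightly shrinking ball centered in $B$ would force its center into $E^-(t)$, contradicting $u_E(\cdot,t)=0$ there.

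The main obstacle throughout is that the raw curvature bounds are borderline. Strong $K$-mean convexity matches the shrink speed $\delta$ exactly, giving only $\partial_\tau x\cdot\nu+H^K\geq0$, and regular $K$-mean convexity even permits negative curvature $-c_E\lambda$ on enlargements, so neither directly produces the strict supersolution required by Proposition \ref{subgeometrico}(i). The device that overcomes this is to perturb the barrier into a strict superflow (shrink a touch slower, at rate $\delta-\epsilon$, in (1); grow along $\psi'=c_E\psi+\delta_0$ in (2)), apply the strict comparison for each fixed parameter, and only afterwards send the parameters to $0$, using $\bigcap_{\lambda>0}E^\lambda=E$. One must check that the interior ball radius of the barrier stays positive for each fixed parameter — it does, being $\geq\epsilon s$, respectively $\geq\eta$ — and degenerates only in the limit, where the statement has already become a plain set inclusion.
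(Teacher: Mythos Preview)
Your approach is essentially the paper's: build strict superflows out of the enlargements $E^\lambda$, feed them into Proposition~\ref{subgeometrico}(i), send the perturbation parameter to zero using $\bigcap_{\lambda>0}E^\lambda=E$, and then propagate in time via comparison. The specific perturbations differ---in (2) you solve $\psi'=c_E\psi+\delta_0$ while the paper uses the linear family $E^{c_E\lambda t}$ on $[0,T]$ with $T<1/c_E$ and lets $\lambda\to0$; in (1) you shrink at rate $\delta-\epsilon$ while the paper works with $E^{\xi-hs}$, $h<\delta$---but the mechanism is identical.

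Two small remarks. In (2) you should record that the curvature bound $H^K_{E^{\psi(\tau)}}\ge-c_E\psi(\tau)$ requires $\psi(\tau)\le\eta_E$; since $\psi$ grows exponentially this holds on any fixed time interval once $\eta,\delta_0$ are taken small enough, which is all you need. For the empty-interior conclusion the paper takes a different and somewhat cleaner route: from $E^+(t+s)\subseteq E^-(t)$ and the lower semicontinuity $\liminf_{s\to0^+}|E^+(t+s)|\ge|\mathrm{int}\,E^+(t)|$ (quoted from \cite[Prop.~A.12]{fattening}) one gets $|\mathrm{int}\,E^+(t)\setminus E^-(t)|=0$ directly. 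Your shrinking-ball barrier also works, but to close it you need to invoke the semigroup identification of the inner flow, $(E^-(t-s))$ evolved for time $s$ equals $E^-(t)$, so that Proposition~\ref{subgeometrico}(ii) applied at base time $t-s$ indeed lands you in $E^-(t)$.
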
 
 \begin{proof} 
 
 \noindent
 \begin{enumerate}\item 
Let $\delta>0$ and $\xi_E$ be the constants associated to $E$, according to Definition \ref{kmean}. Let $\xi\leq \xi_E$. For $0<h< \min( \delta, \xi)$ and $s\in [0, 1]$  \[C(s):=E^{\xi-h s}.\]  
We observe that  $C(s)$ is a supersolution to \eqref{kflow}, in the sense that it
satisfies 
\[ \partial_s x\cdot \nu+H^K_{C(s)}(x)=- h+H^K_{C(s)}(x)>0.\]  
Since $E\subseteq E^{\xi}=C(0)$, by Proposition \ref{subgeometrico},
we get that for all $s\in (0, 1]$ there hold, for every $\xi\leq \xi_E$, 
\[
E^{+}(s)\subseteq  C(s)=E^{\xi-hs} \subseteq E^\xi\ \ \quad\text{and }  \quad 
d\left(E^+(s),E^{\xi-hs}\right)\geq d(E, E^{\xi})=\xi.\]
This implies   that  for all $s\in [0,1]$ \[E^{+}(s)\subseteq \cap_{0<\xi\leq \xi_E} E^\xi=E.\]   \\
 Therefore, if $\text{int}(E)=\emptyset$, we conclude that $\text{int }E^+(t)=\emptyset$ and we recall that $E^-(t)=\emptyset$ for all $t>0$ by  \eqref{empty}.\\
 Assume now  that $E$ has nonempty interior.  Arguing as above we define  $C(s)=E^{\xi_E-\delta s}$ and we get that $C(s)$ is a supersolution to \eqref{kflow} for every $s\in [0, \xi_E/\delta)$. Therefore as above, by Proposition \ref{subgeometrico}, we get that $E^+(s)\subseteq E^{\xi_E-\delta s}$ for every $s\in [0, \xi_E/\delta)$ and $d(E^+(s), E^{\xi_E-\delta s})\geq d(E, E^{\xi_E})=\xi_E$. 

Let $x\in \partial E^+(s)$.  Then $d(x, \partial E^{\xi_E-\delta s})\geq d(E^+(s), E^{\xi_E-\delta s})\geq \xi_E$. Therefore, for every $y\in \partial E$ we get 
 \begin{eqnarray*}
 \xi_E&\leq& d(x, \partial E^{\xi_E-\delta s})=\min_{z\in \partial E^{\xi_E-\delta s}} |x-z|\leq |x-y|+\min_{z\in \partial E^{\xi-\delta s}} |y-z|
 \\
 &=& |x-y|+\xi_E-\delta s,
 \end{eqnarray*}
 which in turn gives that for all $x\in \partial E^+(s)$ with $s\in [0,\xi_E/\delta)$  and all $y\in \partial E$ there holds    
\[|x-y|\geq \delta s.\] This implies that for all $s\in [0,\xi_E/\delta)$ 
 \begin{equation}\label{prova1} d\left(E^+(s), E\right)\geq \delta s>0.\end{equation} 
In particular it follows that $E^+(s)\subseteq \text{int}(E)$.

By the Comparison Principle in
Corollary \ref{cpgeometric}, we get  that
\[E^{+}(t+s)\subseteq E^- (t) \quad 
  {\mbox{ for all }} 
t\geq 0, \ s\in (0, \xi_E/\delta),\quad \text{with }d(E^{+}(t+s), E^- (t) )\geq \delta s.\]
Finally  we recall the following lower semicontinuity result for the outer evolution proved in \cite[Proposition A.12]{fattening}: 
$\liminf_{\eta\to 0}|E^+(t+\eta)|\geq |\text{int }E^+(t)|$.\\ 
Then,  since  $E^{+}(t+s)\subseteq E^-(t)$ for $s\in (0,\xi_E/\delta)$, we get  
\[|\text{int }(E^+(t)\setminus { E^-(t)})|\leq \limsup_{s\to 0^+} |\text{int }E^{+}(t)|-|E^{+}(t+s)|\leq 0\] which gives the conclusion.  

\item 
 Now we consider the  case of a regularly $K$-mean convex set $E$. Let fix $\lambda\leq \eta_E$ and $T<\frac{1}{c_E}$ and define the flow  $C(t)=E^{c_E \lambda t}$ for $t\in [0, T]$. Note that since $c_E \lambda t\leq \eta_E$, there holds that $H^K_{C(t)}(x)\geq -c_E^2 \lambda t\geq  -c_E^2 \lambda T>- c_E\lambda $ for all $t\in [0, T]$, which implies that $C(t)$ is a strict supersolution to \eqref{kflow}. Therefore by Proposition \ref{subgeometrico}, we get that 
 $$
 E^+(t)\subseteq E^{  c_E\lambda  t}\quad\text{ for all $0\leq t\leq T<\frac{1}{c_E}$  and every $\lambda\in (0, \eta_E]$. }
 $$
 This implies that for $t\in \left[0, \frac{1}{c_E}\right)$, $E^+(t)\subseteq \cap_{\lambda\in (0,\eta_E] }E^{c_E\lambda t}=E$, since $E$ is closed.  Then by the Comparison Principle in
Corollary \ref{cpgeometric}, we get  that 
\[E^{+}(t+s)\subseteq E^+ (t) \quad \
  {\mbox{ for all }} 
t\geq 0, \ s\in  \left[0, \frac{1}{c_E}\right).\]
\end{enumerate}
 \end{proof}

 \begin{remark}\upshape  \label{fattrem} Observe that if $E$ is $K$-mean convex and $H^K_E(x)\geq \delta>0$ for all $x\in \partial E$ in viscosity sense, but $E$ is not regularly or strongly $K$-mean convex, then in general it is not true that $E^{+}(t)\subseteq E$ for $t>0$ and moreover it is not true that the flow does not develop fattening.   Fattening  phenomenon  is related to non-uniqueness of the geometric flow; for an analysis of this phenomenon, mainly in dimension $2$,   for geometric equations as \eqref{kflow}, we refer to \cite{fattening}. 
 
As an  example we consider the set  $E$ described in Remark \ref{counter}.  In \cite[Thm 1.10]{fattening} it is proved that there exists $t>0$ and $c>0$ such that $E^-(t)\subseteq B(0, r(\tau))\subseteq E^+(\tau)$ for all $\tau\in [0, t)$, where $r(\tau)=c(n) \tau^{1/(1+s)}$, so implying that \eqref{concl1new} cannot hold. \end{remark} 

Moreover, we show that  monotonicity of the flow implies $K$-mean convexity.
 \begin {proposition}\label{pos2} Let $E$ be a closed  set. 
Assume that  there  exists $h>0$ such that \begin{equation}\label{concl1} E^+(t)\subseteq  E \qquad\text{  for every $0\leq t\leq h$, }\end{equation}
then $H^K_E(x)\geq 0$ in viscosity sense for every $x\in \partial E$. 

If moreover there exists $\delta>0$ such that \[ E^+(t)\subseteq  E  \text{ with }d(E, E^+(t))\geq \delta t\qquad\text{ for every  $0\leq t\leq h$,}\] then $H^K_E(x)\geq \delta$ in viscosity sense for every $x\in \partial E$.
 \end{proposition}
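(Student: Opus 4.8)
The plan is to fix an arbitrary $x_0\in\partial E$ and prove the required viscosity inequality $H^K_E(x_0)\ge c$, where $c=0$ in the first case and $c=\delta$ in the second, arguing by contradiction and violating the assumed monotonicity through the comparison principle of Proposition \ref{subgeometrico}. First I recall, from the discussion following Definition \ref{viscosity}, that the inequality $H^K_E(x_0)\ge c$ is nontrivial only when $E$ admits an interior ball at $x_0$; otherwise it holds for every $c$ and there is nothing to prove. If it fails, Definition \ref{viscosity}(2) furnishes a set $F$ with compact boundary of class $C^{1,1}$ such that $F\subseteq E$, $x_0\in\partial F\cap\partial E$ with outer normal $\nu$ at $x_0$, and $H^K_F(x_0)<c$. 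By the continuity of $H^K$ with respect to $C^{1,1}$ convergence of sets (see \cite{MR3401008}) I may fix $\rho,r>0$, with $\rho<\delta$ in the second case, such that $H^K_F(x)\le c-2\rho$ for every $x\in\partial F\cap B(x_0,r)$.

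The central step is to construct a classical strict subsolution $C(t)$, $t\in[0,t_0]$, of \eqref{kflow}, enjoying a uniform exterior ball condition, with $C(0)\subseteq F\subseteq E$, which coincides with $F$ in a neighbourhood of $x_0$ and whose boundary there moves with the constant normal velocity $w:=\rho-c$. Indeed, near $x_0$ one has $H^K_{C(t)}(x)\le c-2\rho$, whence $\partial_t x\cdot\nu(x)+H^K_{C(t)}(x)\le w+(c-2\rho)=-\rho$, so that the strict subsolution inequality $\partial_t x\cdot\nu+H^K_{C(t)}\le-\rho$ holds there; away from $x_0$ the family $C(t)$ is taken to lie compactly inside $F$, where the inequality can be arranged to persist. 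Since $E\supseteq C(0)$, Proposition \ref{subgeometrico}(ii) then yields $E^+(t)\supseteq C(t)$ for all $t\in[0,t_0]$. Moreover the boundary point of $C$ emanating from $x_0$ is located at $x_0+w\,t\,\nu+o(t)$.

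It remains to read off the contradiction in each case. In the first case $w=\rho>0$, so the boundary moves outward and $x_0\in\mathrm{int}\,C(t)$ for $t\in(0,t_0]$; as $\mathrm{int}\,C(t)$ is an open set contained in $C(t)\subseteq E^+(t)\subseteq E$, this forces $x_0\in\mathrm{int}\,E$, contradicting $x_0\in\partial E$. In the second case $w=\rho-\delta<0$, so the distance from $x_0$ to this boundary point is $(\delta-\rho)t+o(t)$, giving $d(x_0,E^+(t))\le d(x_0,C(t))\le(\delta-\rho)t+o(t)<\delta t$ for $t>0$ small; on the other hand, since $x_0\in\partial E$ and $E^+(t)\subseteq E$ with $d(E,E^+(t))\ge\delta t$, the nearest point of $E^+(t)$ to $x_0$ lies on $\partial E^+(t)$ and hence $d(x_0,E^+(t))\ge d(E,E^+(t))\ge\delta t$, a contradiction. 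I expect the main obstacle to be the construction of the global subsolution $C(t)$: prescribing the correct normal velocity near $x_0$ while keeping the exterior ball condition and the subsolution inequality valid on all of $\partial C(t)$ requires a careful localization (for instance flowing $F$ classically by \eqref{kflow} with a constant additional forcing, using the short-time existence of smooth solutions from \cite{vesa}, or building an explicit barrier from $F$ modified to recede into $E$ away from $B(x_0,r)$); the remaining estimates are then routine.
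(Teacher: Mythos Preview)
Your proposal is correct and follows essentially the same route as the paper: argue by contradiction, take a $C^{1,1}$ interior test set $F$ with $H^K_F(x_0)<c$, build a strict geometric subsolution $C(t)$ of \eqref{kflow} that agrees with $F$ near $x_0$ and moves with normal speed $\rho-c$ there, apply Proposition \ref{subgeometrico}(ii) to get $C(t)\subseteq E^+(t)$, and contradict the assumed distance bound. The paper carries out the barrier construction you flag as the main obstacle by the explicit cutoff recipe $\partial C(t)=\{y+(-\delta+\rho)t\,\psi_r(y)\,\nu_F(y)-(c_F+2\rho)t\,\phi_r(y)\,\nu_F(y):y\in\partial F\}$ with $c_F=\max_{\partial F}H^K_F$ (your ``explicit barrier'' option), and treats both statements at once via a single distance estimate $d(C(t),E)\le(\delta-\rho)t$; note that your alternative of invoking \cite{vesa} would only cover the fractional kernel, so the cutoff construction is indeed the right choice here.
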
 \begin{proof} We prove  directly the second statement, since the first can be proved in a similar way, just putting $\delta=0$. Assume  that  it is not true that $H^K_E(x)\geq \delta$ in viscosity sense for every $x\in \partial E$. Therefore there exists $x\in \partial E$  and a set $F$ with $C^{1,1}$ boundary such that $F\subseteq E$, $x\in \partial F\cap\partial E$ and $H^K_F(x)\leq \delta-4\rho<\delta$. By continuity of the curvature on regular sets (see \cite{MR3401008}), there exists $r>0$ such that for all $y\in \partial F\cap B(x,4r)$, there holds $H_F^K(y)\leq \delta-3\rho$.

 Now we construct a strict subsolution $C(t)$ to \eqref{kflow} with $C(0)=F$ as follows. Let  $c= \max_{y\in\partial F} H^K_F(y)\geq 0$ and 
 let $\psi_r, \phi_r:\R^n\to [0,1]$ be two  smooth functions such that $\psi_r(y)=1$ for $y\in B(x, r)$ and $\psi_r(y)=0$ for $y\in \R^n\setminus B(x,2r)$,  and on the other hand $\phi_r(y)=0$ for $y\in B(x,3r)$ and $\phi_r(y)=1$ for $y\in\R^n\setminus B(x,4r)$. We construct a family of regular sets as follows: $C(0)=F$ and $C(t)$ is the set whose boundary is 
 \[ \partial C(t)=\{y+(-\delta+\rho) t \psi_r(y)  \nu_{\partial F}(y)-(c+2\rho)t \phi_r(y) \nu_{\partial F}(y)\ \text{ for  all }y\in  \partial F\}\] where $\nu_F(y)$ is the outer normal of $F$ at $x\in\partial F$.  For $t>0$ sufficiently small, $C(t)$ is of class $C^{1,1}$ and moreover,  by continuity of the curvature on regular sets, \begin{equation}\label{ct} H_{C(t)}^K(y)\leq \delta-2\rho\quad \text{for $y\in \partial C(t)\cap B(x,4r)$}\qquad \text{and }\quad c+\rho\geq \max_{y\in \partial C(t)} H^K_{C(t)}(y).\end{equation} Finally, observe that  at every 
 $y\in \partial C(t)$ there holds 
 \[\partial_t y\cdot \nu(y) =(-\delta+\rho) \psi_r(y)-(c+2\rho)\phi_r(y)\leq -H_{C(t)}^K(y)-\rho \] where the last inequality is obtained by recalling the definition of $\phi_r, \psi_r$ and \eqref{ct}. 
We conclude by Proposition \ref{subgeometrico} that, since $C(0)=F\subseteq E$, then $C(t)\subseteq E^+(t)$ for all $t>0$ sufficiently small. 
 
Note that $d(x+(-\delta+\rho)t \nu_F(x),x)=(\delta-\rho)t$  and then $d(C(t), E)\leq (\delta-\rho)t<\delta t$, in contradiction with the fact that $d(E^+(t), E)\geq \delta t$ and $C(t)\subseteq E^+(t)\subseteq E$. 
 
\end{proof} 

\begin{remark}\upshape \label{entro} 
Note that, arguing exactly as in the proof of Proposition \ref{pos2}, we may prove the following result: if $E$ is a closed set such that there exist $\delta>0$ and  $h>0$ for which 
\[\sup_{x\in E^+(t)} d(x, E)\leq \delta t\qquad \forall t\leq h, \]
then \[H^K_{E}(x)\geq -\delta\qquad\text{ in viscosity sense for all $x\in \partial E$}. \]

Indeed we argue by contradiction and we choose $F$ as in the proof of Proposition \ref{pos2},  with $C^{1,1}$ boundary such that $F\subseteq E$, $x\in \partial F\cap \partial E$ and $H^K_F(y)\leq -\delta-2\rho$ for all $y\in \partial F\cap B(x,r)$.   We construct now a strict  subsolution  to \eqref{kflow} as 
 \[\partial C(t)=\{y+(\delta+\rho) t \psi_r(y)  \nu_{\partial F}(y)-(c+2\rho)t \phi_r(y) \nu_{\partial F}(y)\ \text{ for  all }y\in  \partial F\}\] where $c= \max_{y\in\partial F} H^K_F(y)\geq 0$ (since $F$ is compact). Therefore by comparison $C(t)\subseteq E^+(t) $ and $\sup_{x\in C(t)} d(x, E)\geq (\delta+\rho) t$, which gives a contradiction. 
\end{remark} 

We collect the previous results about flows of regularly and strongly   $K$-mean convex sets.
 \begin{theorem}\label{thmconvexity}\ \   \begin{enumerate}\item  Let $E$ be a strongly $K$-mean convex set with associated  constant $\delta\geq 0$. Then for all $\eta\in [0, \xi_E)$ the outer flow $(E^\eta)^+(t)$ is monotone according to  \eqref{concl3new} if $\delta>0$, or to   \eqref{concl1new}  if $\delta=0$ and moreover  there holds 
\[H^K_{(E^\eta)^+(t)}(x)\geq \delta\qquad\text{  for all $t\geq 0$.  }\] 
\item  Let $E$ be a  regularly  $K$-mean convex set. Then  the outer flow $E^+(t)$ is monotone according to \eqref{concl1new}  and  there holds 
\[H^K_{E^+(t)}(x)\geq 0\qquad \forall t\geq 0. \] 
\end{enumerate} 
\end{theorem}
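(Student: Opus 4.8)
The plan is to read Theorem \ref{thmconvexity} as the gluing of the two converse implications already at our disposal. Proposition \ref{chara} is the \emph{forward} direction: (regular or strong) $K$-mean convexity forces the stated monotonicity of the outer flow. Proposition \ref{pos2} is the \emph{backward} direction: that same monotonicity, read at a fixed time, forces the corresponding lower bound on the $K$-curvature. The only new ingredient needed to close the loop is the semigroup property of the level set flow, which transports the monotonicity estimates valid at the initial instant to every later time.

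I would begin with part (1). First I would record that $E^\eta$ is itself strongly $K$-mean convex with the same constant $\delta$: since $(E^\eta)^\lambda=E^{\eta+\lambda}$, the defining inequality of Definition \ref{kmean} for $E$ gives $H^K_{(E^\eta)^\lambda}(x)\geq\delta$ for all $\lambda\in[0,\xi_E-\eta]$, so $E^\eta$ is strongly $K$-mean convex with associated constant $\delta$ and $\xi_{E^\eta}=\xi_E-\eta>0$. Applying Proposition \ref{chara}(1) to $E^\eta$ then yields directly the monotonicity \eqref{concl3new} when $\delta>0$. When $\delta=0$, by Remark \ref{regularset} (together with the same identity $(E^\eta)^\lambda=E^{\eta+\lambda}$) the set $E^\eta$ is regularly $K$-mean convex, and Proposition \ref{chara}(2) supplies the weaker nesting \eqref{concl1new}; this reduction is exactly what produces the dichotomy stated in the theorem.

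For the curvature bound at a fixed time $t_0>0$, write $G:=E^\eta$. By uniqueness of the viscosity solution to \eqref{levelset} and the geometric invariance recorded in Remark \ref{ineout}, the outer flow issued from the closed set $G^+(t_0)$ coincides with the original flow shifted in time, namely $(G^+(t_0))^+(s)=G^+(t_0+s)$. Combined with \eqref{concl3new} this gives $(G^+(t_0))^+(s)\subseteq G^-(t_0)\subseteq G^+(t_0)$ together with $d\big(G^+(t_0),(G^+(t_0))^+(s)\big)\geq\delta s$ for small $s$, so Proposition \ref{pos2} yields $H^K_{G^+(t_0)}(x)\geq\delta$ for every $x\in\partial G^+(t_0)$; at $t_0=0$ the bound is just the definition of strong $K$-mean convexity. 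Part (2) is the same argument with $\delta=0$: for a regularly $K$-mean convex $E$, Proposition \ref{chara}(2) gives $E^+(t_0+s)\subseteq E^+(t_0)$, the semigroup identity turns this into $(E^+(t_0))^+(s)\subseteq E^+(t_0)$, and the first statement of Proposition \ref{pos2} gives $H^K_{E^+(t_0)}(x)\geq0$.

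The hard part will be justifying the semigroup step $(G^+(t_0))^+(s)=G^+(t_0+s)$. The difficulty is that the original flow only controls the two level sets $G^+=\{u_G(\cdot,t_0)\geq0\}$ and $G^-=\{u_G(\cdot,t_0)>0\}$, whereas the outer flow of $G^+(t_0)$ is defined through a fresh representative, say the signed distance $d_{G^+(t_0)}$, whose strict superlevel set is $\mathrm{int}\,G^+(t_0)$ rather than $G^-(t_0)$; and Remark \ref{ineout} warns that $E^+$ depends on both the $\{\cdot\geq0\}$ and the $\{\cdot>0\}$ sets of the initial datum. To control this I would use the no-fattening information already produced in Proposition \ref{chara}, namely that $G^+(t)\setminus G^-(t)$ has empty interior for $t>0$, and realize the comparison through a sandwich by the comparison principle of Proposition \ref{cpgeometric}: the two initial data share the same $\{\cdot\geq0\}$ set, and the empty-interior property pins the two outer flows together, so that the distance estimate transfers as $d\big(G^+(t_0),G^+(t_0+s)\big)=d\big(G^-(t_0),G^+(t_0+s)\big)\geq\delta s$. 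Making this identification fully rigorous, rather than the clean algebraic manipulations above, is where the real work lies.
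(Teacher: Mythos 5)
Your proposal follows essentially the same route as the paper's proof: observe that $E^\eta$ inherits strong $K$-mean convexity with $\xi_{E^\eta}=\xi_E-\eta$, apply Proposition \ref{chara} for the monotonicity, and then feed that monotonicity into Proposition \ref{pos2} to recover the curvature bound at each time. The only difference is that you make explicit (and correctly flag as the delicate point) the semigroup identity $(G^+(t_0))^+(s)=G^+(t_0+s)$ needed to invoke Proposition \ref{pos2} at positive times — a step the paper leaves implicit, and which is indeed settled by the two-sided comparison in Proposition \ref{cpgeometric}(ii)--(iii).
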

\begin{proof} 
\noindent
\begin{enumerate} \item Note that  by definition if $K$ is strongly $K$-mean convex with associated constant $\delta\geq 0$, then also $E^\eta$, for any $\eta\in (0, \xi_E)$, is  strongly $K$-mean convex with associated constant $\delta\geq 0$, and $\xi_{E^\eta}=\xi_E-\eta$. Therefore, we may apply Proposition \ref{chara}   to every $E^\eta$ and deduce that    if $\delta=$ then \eqref{concl1new}  holds for $(E^\eta)^+(t)$ for every $t\geq 0$ and if $\delta>0$ then \eqref{concl3new} holds   for  $s\in \left[0, \frac{\xi_E-\eta}{\delta}\right]$ and for every $t\geq 0$.  Now, by Proposition \ref{pos2}, we get that \[H^K_{(E^\eta)^+(t)}(x)\geq \delta\qquad\text{  for all $t\geq 0$.  }\]

 \item The fact that $H^K_{E^+(t)}(x)\geq 0$ is a consequence of \eqref{concl3new} and Proposition \ref{pos2}. 
%
 \end{enumerate} 
 
 \end{proof} 

 \section{Minimizing movements}\label{sezionemin}
 
 We now recall  the variational scheme, sometimes called minimizing movements, introduced  in \cite{ATW} for the classical mean curvature flow,
 and later extended to the nonlocal setting in  \cite{MR3401008}. 
 
Given a nonempty set $ E\subseteq \R^n$ with compact boundary and a time step $h>0$, if $E$ is bounded we define the  set $T_h(E)$ as  a solution of the minimization problem
\begin{equation}\label{min}\min_{F\subseteq\R^n} \Per_K(F)-\frac{1}{h} \int_{F} d_{E }(x)dx. 
\end{equation} If  $E$ is unbounded then we define  $T_h(E):=\R^n\setminus T_h(\R^n\setminus E)$.   
We also  let $T_h(\emptyset):=\emptyset$.

We iterate  the scheme to obtain  $T^{(k)}_h(E)=T_h(T_h^{(k-1)}(E))$,  where we put  $T_h^{(1)}(E)=T_h(E)$,  and we define the   following piecewise constant flows as follows
\begin{equation}\label{pol}E_h (t) =T_h^{(k)}(E)\qquad \text{ for } t\in [kh, (k+1)h).\end{equation}       

In the sequel we will identify a minimizer $T_h(E)$, and a time discrete flow $E_h (t)$,
with the representative given by the set of Lebesgue points of the characteristic function.

We recall from \cite{MR3401008} some results about this scheme.

\begin{theorem}\label{proprieta}\ \ \  
\begin{enumerate}
\item For any   set $E$, the minimization problem \eqref{min} admits a maximal $T^{+}_h(E)$  and a minimal solution $T^{-}_h(E)$ (with respect to inclusion).  We will denote the flow obtained in \eqref{pol} by interpolating  respectively the minimal and the maximal solution as respectively $E_h^-(t) $ and  $E_h^+(t) $. Every flow constructed as in \eqref{pol} satisfies $E_h^{-}(t)\subseteq E_h(t)\subseteq E^+(t)$.
\item  If $E\subseteq F$, then $T_h^\pm(E)\subseteq T_h^\pm(F)$. Moreover if $d(E, F)\geq r$, then $d(T_h(E), T_h(F))\geq r$.
\item  There exists a  constant $C>1$ depending only on  the dimension, such that for every fixed $R>0$ and   every $h>0$ such that \[R-h \min_{x\in \partial B(0,CR)}H^K_{B(0,CR)}(x)) >0,\] there holds \[T^{\pm}_h(B(0,R))\subseteq B\left(0, R-h \min_{x\in \partial B(0,CR)}H^K_{B(0,CR)}(x)\right).\] 
\item For every $R_0>0$, $\sigma>1$ there exists $h_0>0$ depending on $R_0, \sigma, C$ such that if $h\leq h_0$, then there holds for any $R\geq R_0$, and $h\leq h_0$, 
 \[ B\left(0, R-h \max_{x\in \partial B(0,R/\sigma)}H^K_{B(0,R/\sigma)}(x)\right)\subseteq T^{\pm}_h(B(0,R)).\]  
\item Let  $E\subset F$ be a nonempty bounded set with $r=d(E,F)>0$. Then there exists $h_0>0$ depending on $r$ and the dimension such that  for all $h\leq h_0$, there holds that $T^{\pm}_h(E)\subseteq F$ and moreover \[d(T^{+}_h(E), F)\geq r-h \max_{x\in \partial B(0,r/2)}H^K_{B(0,r/2)}(x)>0.\]  \end{enumerate} 
\end{theorem}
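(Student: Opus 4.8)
The plan is to treat the five assertions as consequences of the variational structure of the one-step problem \eqref{min}, following \cite{MR3401008}; I sketch the arguments.

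For (1) I would first obtain a minimizer of $\mathcal F_E(F):=\Per_K(F)-\frac1h\int_F d_E(x)\,dx$ by the direct method: since $d_E(x)\to-\infty$ as $|x|\to\infty$ when $E$ is bounded, any minimizing sequence may be truncated to a fixed large ball without increasing $\mathcal F_E$, after which lower semicontinuity of $\Per_K$ together with the $L^1_{loc}$-compactness of sets of finite nonlocal perimeter produces a minimizer. The existence of a maximal and a minimal minimizer then follows from the submodularity \eqref{sub}: as the linear term $F\mapsto\int_F d_E$ is additive under $\cup$ and $\cap$, the functional $\mathcal F_E$ is submodular, so whenever $F_1,F_2$ are minimizers so are $F_1\cap F_2$ and $F_1\cup F_2$; taking the union (resp.\ intersection) over the equibounded family of all minimizers yields $T_h^+(E)$ (resp.\ $T_h^-(E)$). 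The chain $E_h^-(t)\subseteq E_h(t)\subseteq E_h^+(t)$ is then inherited step by step from $T_h^-\subseteq T_h\subseteq T_h^+$ and the monotonicity proved in (2).

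For (2) I would run the standard comparison-by-lattice argument. Let $E\subseteq F$, so $d_E\le d_F$ pointwise, and let $A,B$ be minimizers for $E,F$ respectively. Testing the minimality of $A$ against the competitor $A\cap B$ and that of $B$ against $A\cup B$, then adding the two resulting inequalities and using \eqref{sub} together with $d_F-d_E\ge0$, one sees that a sum of two nonpositive terms is bounded below by a sum of two nonnegative terms, so all of them vanish; in particular $A\cap B$ is a minimizer for $E$ and $A\cup B$ a minimizer for $F$. Choosing $A,B$ to be the maximal (resp.\ minimal) solutions and invoking maximality (resp.\ minimality) gives $T_h^\pm(E)\subseteq T_h^\pm(F)$. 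The distance statement then follows from the translation invariance of \eqref{min}, namely $T_h(E+v)=T_h(E)+v$: if $d(E,F)\ge r$ and $E\subseteq F$, then $E+v\subseteq F$ for every $|v|<r$, whence $T_h(E)+v=T_h(E+v)\subseteq T_h(F)$ and $d(T_h(E),T_h(F))\ge r$.

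For (3)–(5) I would use balls as barriers. The $K$-curvature of a ball, $H^K_{B(0,\rho)}$, is a radial constant that is nonincreasing in $\rho$, and a single step cannot expand a ball by more than a dimensional factor, which gives the a priori inclusion $T_h^\pm(B(0,R))\subseteq B(0,CR)$. On $B(0,CR)$ every competing ball has curvature at least $\min_{\partial B(0,CR)}H^K_{B(0,CR)}$, and comparing with the ball of radius $R-h\min_{\partial B(0,CR)}H^K_{B(0,CR)}$ as a discrete supersolution yields the outer bound (3); the matching inner bound (4), valid for $h\le h_0$ small, comes from comparing with $B(0,R/\sigma)$ as a discrete subsolution. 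Finally (5) follows by applying (2) and (3) to the configuration $E\subset F$ with $r=d(E,F)$: inserting a ball of radius $r/2$ in the gap and estimating its one-step motion gives $T_h^\pm(E)\subseteq F$ together with the quantitative distance bound for $h$ small. The main obstacle is precisely this explicit ball analysis: pinning down the dimensional constant $C$, justifying the a priori containment $T_h^\pm(B(0,R))\subseteq B(0,CR)$, and showing that one minimizing step moves the boundary of a ball by no more than $O(h\,H^K_{B})$ in either direction—which is exactly where the smallness condition $h\le h_0$ enters—together with the monotonicity of $\rho\mapsto H^K_{B(0,\rho)}$ for a general kernel $K$ satisfying only \eqref{ROTAZION2}. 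By contrast, the abstract parts (1)–(2) are soft consequences of submodularity and translation invariance.
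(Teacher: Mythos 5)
Your sketches of items (1)--(4) are consistent with the standard arguments, and in fact the paper offers no proof of these items at all: it simply cites Proposition 7.1 and Lemmas 7.2, 7.4, 7.5, 7.6, 7.10 of \cite{MR3401008}. So the only part where a comparison with the paper's own argument is possible is item (5), which is also the only place where your proposal has genuine gaps. (A side remark on your item (1): for a kernel satisfying only \eqref{ROTAZION2} --- in particular for an integrable $K$ --- a bound on $\Per_K$ does \emph{not} yield $L^1_{loc}$ compactness, so ``lower semicontinuity plus compactness of sets of finite nonlocal perimeter'' is not available as stated; the existence proof in the cited reference is more delicate. Since the paper delegates this entirely, I do not count it against you, but be aware of it.)

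For item (5) your strategy (ball barriers in the gap plus monotonicity) is the same as the paper's, but two concrete steps are wrong or missing. First, you invoke the wrong barrier estimate. The paper's argument fixes $x\in\partial F$ and $r'<r$, notes $E\subseteq\R^n\setminus B(x,r')$, applies the monotonicity (2) together with the convention $T_h(\R^n\setminus B)=\R^n\setminus T_h(B)$, and then needs the \emph{inner} bound of item (4) with $R_0=r/2$, $\sigma=2$, namely $B\left(x,r'-h\max_{y\in\partial B(0,r'/2)}H^K_{B(0,r'/2)}(y)\right)\subseteq T^{\pm}_h(B(x,r'))$, to conclude that $T_h^{\pm}(E)$ misses a definite ball around every boundary point of $F$. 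The outer bound (3) that you cite controls how much a ball \emph{shrinks} and gives nothing in this configuration. Second, this barrier argument only shows that $T_h^{\pm}(E)$ is disjoint from a tubular neighborhood of $\partial F$ of width $r-h\max_{\partial B(0,r/2)}H^K_{B(0,r/2)}$; it does not by itself give $T_h^{\pm}(E)\subseteq F$, since a priori the minimizer could have mass in $\R^n\setminus F$ beyond that neighborhood. The paper closes this with a separate energy argument: on any such exterior piece one has $d_E\leq -2r+h\max_{\partial B(0,r/2)}H^K_{B(0,r/2)}<0$, so translating (or discarding) that piece produces a competitor for \eqref{min} with strictly smaller energy, contradicting minimality. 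Your sketch asserts the conclusion $T_h^{\pm}(E)\subseteq F$ without this step, and it is not a formality: it is exactly where the variational structure of \eqref{min}, and not just comparison, is used.
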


\begin{proof} For the proof of items (1)-(4) we refer to Proposition 7.1, Lemma 7.2, Lemma 7.4, Lemma 7.5, Lemma 7.6, Lemma 7.10 in \cite{MR3401008}.  

We now show item (5). 
We fix $x\in \partial F$ and observe that by assumption, for every $r'<r$, $E\subset \R^n\setminus B(x, r')$ and then by monotonicity 
\begin{equation}\label{monow} T^\pm(E)\subseteq T^\pm(\R^n\setminus B(x, r'))=\R^n\setminus T^\mp (B(x, r')).\end{equation}    Now, we apply item (4), choosing $R_0=r/2$ and $\sigma=2$: there exists $h_0$ depending on $r$ such that for all $r'>r/2$, and $h\leq h_0$, 
\[ B\left(x, r'-h \max_{y\in \partial B(0,r'/2)}H^K_{B(0,r'/2)}(y)\right)\subseteq T^{\pm}_h(B(x,r')). \]
Substituting in \eqref{monow} we get  for all $x\in \partial F$ 
\[ T^{\pm}_h(E)\subseteq  \R^n\setminus B\left(x, r'-h \max_{y\in\partial B(0, r'/2)} H^K_{B(0,r'/2)}(y)\right)\qquad \text{ for all }r'\in (r/2,r), \ h\leq h_0.\]
 This implies  for all  $h\leq h_0$,  that either $T^{\pm}_h(E)\subseteq F$  or
  $T^{\pm}_h(E)\subseteq \R^n\setminus F$, and in both cases   \begin{equation}\label{bordi}d(F, T^{\pm}_h(E))\geq  r-h \max_{y\in\partial B(0, r/2)} H^K_{B(0,r/2)}(y)>0.\end{equation}  

Finally, we observe that  necessarily $T^{\pm}_h(E)\subseteq F$. Assume by contradiction that $T^{\pm}_h(E)\subseteq \R^n\setminus F$. Then, recalling that $E\subset F$ with $d(E,F)=r$, from \eqref{bordi} we would get that      
$d_E(x)\leq -2r+h \max_{y\in\partial B(0, r/2)} H^K_{B(0,r/2)}(y)<0$  for every $x\in  T^{+}_h(E)$. So, it would be possible, just by translating $T^{+}_h(E)$ to construct a competitor with strictly  less energy, and so to prove that $T_h^{+}(E)$ couldn't be a solution to the minimization problem \eqref{min}. 
\end{proof} 
Finally we recall the convergence of the scheme to $K$-mean curvature flow, as proved in \cite[Proposition 7.12 and Theorem 7.16]{MR3401008}.

\begin{theorem}\label{convschema} 
Let $u_0$ be a  Lipschitz continuous function. We define 
\[ T_h u_0(x):=\sup\left\{\lambda \text{ s.t. }x\in T_h \left(\{x \text{ s.t. } u_0(x)>\lambda\} \right) \right\},\]
and iteratively   for $k\in \N$, 
\begin{equation}\label{thu} T^{(k)}_h u_0(x):= T_h (T^{(k-1)}_h u_0(x)).\end{equation} 
Let \[u_h(x,t):=T^{[t/h]}_h u_0(x) \] then there holds 
\[\begin{cases} 
T_h^-\left(\{x\text{ s.t. } u_h(x, (k-1)h)>\lambda\}\right)&=\ \{x\text{ s.t. } u_h(x, k h)>\lambda\}
\\ 
T_h^+\left(\{x\text{ s.t. } u_h(x, (k-1)h)\geq \lambda\}\right)&=\ \{x\text{ s.t. } u_h(x, kh)\geq \lambda\} \end{cases}\]
where the second equality holds up to a negligible set,
and moreover 
\[u_h(x,t)\to u(x,t)\qquad \text{ as $h\to 0$,  locally uniformly in $\R^n\times [0, +\infty)$,} \]
where $u(x,t)$ is the unique solution to \eqref{levelset} with initial datum $u_0$. 
\end{theorem}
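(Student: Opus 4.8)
The plan is to treat the two assertions separately: first the purely \emph{algebraic} identities relating the function-level operator $T_h$ to the set-level operators $T_h^\pm$, and then the \emph{convergence} $u_h\to u$, which I would obtain by a Barles--Souganidis type argument built on the comparison principle for \eqref{levelset}.

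For the level-set structure, by the very definition \eqref{thu} it suffices to prove, for a single application of the scheme, that if $v:=T_h w$ for a given Lipschitz $w$, then $\{v>\lambda\}=T_h^-(\{w>\lambda\})$ and $\{v\geq\lambda\}=T_h^+(\{w\geq\lambda\})$ up to a negligible set; the general $k$ then follows by iterating. The starting observation is monotonicity: for $\mu>\lambda$ one has $\{w>\mu\}\subseteq\{w>\lambda\}$, so by Theorem \ref{proprieta}(2) the sets $T_h(\{w>\mu\})$ are nested decreasingly in $\mu$. Hence from $v(x)=\sup\{\mu:\ x\in T_h(\{w>\mu\})\}$, and since this super-level set is a down-set in $\mu$, I obtain
\[
\{v>\lambda\}=\bigcup_{\mu>\lambda}T_h(\{w>\mu\}),\qquad \{v\geq\lambda\}=\bigcap_{\mu<\lambda}T_h(\{w>\mu\}).
\]
Because $\{w>\mu\}\nearrow\{w>\lambda\}$ as $\mu\searrow\lambda$ and $\{w>\mu\}\searrow\{w\geq\lambda\}$ as $\mu\nearrow\lambda$, the remaining point is a continuity statement for the scheme under monotone set convergence: the increasing limit selects the \emph{minimal} minimizer $T_h^-(\{w>\lambda\})$, while the decreasing limit selects the \emph{maximal} one $T_h^+(\{w\geq\lambda\})$. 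This I would deduce from the lower semicontinuity of $\Per_K$ together with the stability of minimizers under $L^1$ convergence, exactly in the spirit of Proposition \ref{conv}; the passage from strict to non-strict super-level sets is precisely what forces the $T_h^-$/$T_h^+$ dichotomy, and the ``up to a negligible set'' caveat reflects that minimal and maximal minimizers coincide off a null set.

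For the convergence I would set $\overline u:=\limsup{}^{*} u_h$ and $\underline u:=\liminf{}_{*} u_h$, the half-relaxed limits; these are well defined and finite since the $u_h$ inherit a uniform local bound and a uniform Lipschitz constant from $u_0$, by the contraction properties of $T_h$. The core is to show that $\overline u$ is a viscosity subsolution and $\underline u$ a viscosity supersolution of \eqref{levelset}. Testing with a smooth function touching $\overline u$ from above at a point $x_0$ reduces the one-step increment of the scheme to a comparison of $T_h$ against balls inscribed in, and circumscribed to, the super-level set of the test function; the ball estimates of Theorem \ref{proprieta}(3)--(5) then quantify this increment and, after dividing by $h$ and letting $h\to0$, reproduce exactly the term $|Du|\,H^K$ of the equation. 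Once $\overline u$ and $\underline u$ are shown to be a sub- and a supersolution with $\overline u\geq\underline u$ trivially, the comparison principle for \eqref{levelset} recalled in Section \ref{sezionelevel} yields $\overline u\leq\underline u$, hence $\overline u=\underline u=:u$ and the local uniform convergence $u_h\to u$.

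The main obstacle is the \emph{consistency} estimate inside the convergence proof, namely showing that the normalized discrete displacement $\tfrac1h\,\big(T_h(E)\,\triangle\,E\big)$ correctly approximates $H^K_E$ in the viscosity sense near the contact point. This is the only genuinely quantitative step: it uses that $H^K$ is the first variation of $\Per_K$ and it is exactly where the ball-comparison bounds of Theorem \ref{proprieta}(3)--(5) must be combined with the minimality defining $T_h$. The level-set identities and the comparison principle then merely package this local estimate into the global convergence statement.
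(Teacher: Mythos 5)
The paper does not prove this theorem at all: it is explicitly recalled from \cite{MR3401008} (Proposition 7.12 and Theorem 7.16), and your outline follows essentially the same route as that reference, namely the level-set consistency of the discrete scheme via monotonicity of $T_h^{\pm}$ plus a Barles--Souganidis argument (half-relaxed limits, consistency against balls, comparison principle). Your plan is sound, but the two places where the real work lives remain only sketched: the identification of the monotone limits $\bigcup_{\mu>\lambda}T_h(\{w>\mu\})$ and $\bigcap_{\mu<\lambda}T_h(\{w>\mu\})$ with $T_h^-(\{w>\lambda\})$ and $T_h^+(\{w\geq\lambda\})$ requires not just $L^1$-stability of minimizers but also the observation that $T_h^-$ and $T_h^+$ of the level sets can differ only for countably many $\lambda$ (which is exactly what produces the ``up to a negligible set'' caveat), and the consistency estimate comparing $\tfrac1h\,d_E$ on $T_h(E)\triangle E$ with $H^K_E$ is the quantitative core that you correctly locate in Theorem \ref{proprieta}(3)--(5) but do not carry out.
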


 \section{$K$-flow of $K$-outward minimizing sets} \label{sezionekoutward} 

In this section we show that the level set flow preserves the $K$-outward minimality. In the case of the classical mean curvature flow, we refer to \cite{dephilaux}  for an analysis of outward minimizing sets. In particular in that paper it is shown that these sets provide a   class of initial data for which the minimizing movement scheme converges to the level set flow. For the generalization of this result to the anisotropic case and cristalline case, we refer to \cite{chn}. 

First of all  we show that the  the minimizing movement scheme \eqref{pol} starting from a $K$-outward minimizer is monotone (see  \cite[Lemma 2.7]{dephilaux} for the case of the classical perimeter, and \cite[Lemma 2.3]{chn} for the anisotropic perimeter).

\begin{proposition}\label{prop1} 
Let $\Omega$ be an  open set and let  $E$ be a nonempty bounded set with $E\subset\subset \Omega$. 
If $E$ is a $K$-outward minimizing  set  in $\Omega$, then there exists $h_0$ depending on $r=d(E,\Omega) >0$ such that for all $h\leq h_0$,  every piecewise constant flow  $E_h (t) =T_h^{(k)}(E)$, for $t\in [kh, (k+1)h)$  defined in \eqref{pol}   satisfies
\[E_h(t)\subseteq \overline{E_h(s)}  \quad  \text{ and }\quad \Per_K(E_h(t))\leq \Per_K( E_h(s)) \qquad \forall t\geq s\geq 0\] where $E_h(0)=E$.   
Moreover $E_h(t)$ is a $K$-outward minimizing set in $\Omega$, so that $H^K_{E_h(t)}(x)\geq 0$ in the viscosity sense at every $x\in\partial E_h(t)$. 
 \end{proposition}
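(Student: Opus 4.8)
The plan is to establish a single step of the scheme and then iterate. Precisely, I will prove the following one-step claim: if $E$ is bounded, $K$-outward minimizing in $\Omega$ and $d(E,\Omega)\geq r$, then for every $h\leq h_0(r)$ the set $F:=T_h(E)$ satisfies $F\subseteq\overline E$, $\Per_K(F)\leq\Per_K(E)$, and $F$ is again $K$-outward minimizing in $\Omega$. Since $F\subseteq\overline E$ forces $d(F,\Omega)\geq d(E,\Omega)\geq r$, the threshold $h_0=h_0(r)$ does not degrade along the iteration; applying the one-step claim to $T_h^{(k-1)}(E)$ gives, by induction on $k$, the inclusions $T_h^{(k)}(E)\subseteq\overline{T_h^{(k-1)}(E)}$ and the monotonicity of $\Per_K$, so that $E_h(t)\subseteq\overline{E_h(s)}$ and $\Per_K(E_h(t))\leq\Per_K(E_h(s))$ whenever $t\geq s$. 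As each $T_h^{(k)}(E)$ is $K$-outward minimizing and compactly contained in $\Omega$, the bound $H^K_{E_h(t)}(x)\geq0$ then follows from Theorem \ref{euler}(1).

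The first ingredient is to keep the minimizer near $E$. Applying Theorem \ref{proprieta}(5) to the enlarged set $E^{r/2}$, for which $d(E,E^{r/2})=r/2$, furnishes $h_0=h_0(r)$ so that $F=T_h(E)\subseteq E^{r/2}\subset\subset\Omega$ whenever $h\leq h_0$; in particular $F\setminus E\subset\subset\Omega$, which is exactly the admissibility condition required to test the outward minimality of $E$ against $F$. I will also use throughout the localization identity $\Per_K(A)-\Per_K(B)=\Per_K(A,\Omega)-\Per_K(B,\Omega)$, valid whenever $A\triangle B\subset\subset\Omega$ because the interactions between pairs of points both lying outside $\Omega$ coincide for $A$ and $B$; this lets me pass freely between the global functional minimized in \eqref{min} and the localized perimeter of Definition \ref{outward}.

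The inclusion $F\subseteq\overline E$ comes from a two-sided comparison. Testing the minimality of $F$ in \eqref{min} against $F\cap E$ and using $d_E\leq0$ on $F\setminus E$ gives $\Per_K(F)\leq\Per_K(F\cap E)$; conversely, the characterization \eqref{inter} of outward minimality applied with competitor $F$, combined with the localization identity, gives $\Per_K(F\cap E)\leq\Per_K(F)$. Equality then forces $\int_{F\setminus E}d_E=0$, and since $d_E<0$ a.e.\ on the complement of $\overline E$ we conclude $|F\setminus E|=0$, i.e.\ $F\subseteq\overline E$. The energy decrease $\Per_K(F)\leq\Per_K(E)$ is obtained by testing minimality against the competitor $E$ and using $d_E\geq0$ on $E\setminus F$.

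The delicate point, which I expect to be the main obstacle, is that $F$ is itself $K$-outward minimizing in $\Omega$. I would argue in two stages. First, for any $H$ with $F\subseteq H\subseteq E$ and $H\setminus F\subset\subset\Omega$, minimality of $F$ together with $d_E\geq0$ on $H\setminus F\subseteq E$ gives $\Per_K(F,\Omega)\leq\Per_K(H,\Omega)$, so $F$ is outward minimizing among competitors contained in $E$. To lift the constraint $H\subseteq E$, let $G\supseteq F$ be arbitrary with $G\setminus F\subset\subset\Omega$ and apply the submodularity inequality \eqref{sub} to $G$ and $E$: since $(G\cup E)\setminus E=G\setminus E\subseteq G\setminus F\subset\subset\Omega$, the outward minimality of $E$ gives $\Per_K(E,\Omega)\leq\Per_K(G\cup E,\Omega)$, and \eqref{sub} then yields $\Per_K(G\cap E,\Omega)\leq\Per_K(G,\Omega)$. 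As $F\subseteq G\cap E\subseteq E$, the first stage applied to $H=G\cap E$ gives $\Per_K(F,\Omega)\leq\Per_K(G\cap E,\Omega)\leq\Per_K(G,\Omega)$, which is precisely the outward minimality of $F$ and closes the induction.
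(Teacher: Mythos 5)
Your argument is correct and follows essentially the same route as the paper's proof: containment $T_h(E)\subseteq\overline E$ via the competitor $F\cap E$ combined with outward minimality, perimeter decrease via the competitor $E$, and outward minimality of $T_h(E)$ by testing the scheme against $G\cap E$ and then invoking the characterization \eqref{inter} (your explicit submodularity step is exactly how Proposition \ref{conv} reduces $G$ to $G\cap E$). The only cosmetic differences are that you isolate the intermediate claim for competitors $H$ with $F\subseteq H\subseteq E$ and spell out the choice $E^{r/2}$ in Theorem \ref{proprieta}(5); note only that, as in the paper, one should use $d_E=0$ on $\overline E\setminus E$ to handle the fact that a priori $F\subseteq G\cap\overline E$ rather than $F\subseteq G\cap E$.
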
 
 
\begin{proof}  
First of all we observe that by Theorem \ref{proprieta},   since $E\subset\subset \Omega$, then there exists $h_0$ such that  $T_h^+ (E)\subset \subset \Omega $  for all $h\leq h_0$.  
Now we proceed by induction on $k\geq 0$ and to avoid long notation we will denote  $E_k:=T_h^{(k)}(E)$.
 Since $T_h(E_k)$ is a minimizer of \eqref{min}, choosing $E_k\cap T_h(E_k)$ as a competitor   we get
\begin{eqnarray*}\nonumber  \Per_K(T_h(E_k))- \Per_K(E_k\cap T_h(E_k))&\leq & \frac{1}{h} \int_{T_h(E_k)} d_{E_k}(x)dx-\frac{1}{h} \int_{E_k\cap T_h(E_k)} d_{E_k}(x)dx\\ 
&=& \frac{1}{h} \int_{T_h(E_k)\setminus E_k}d_{E_k}(x)dx\leq 0\label{servepoi}\end{eqnarray*} since  $d_{E_k}\leq 0$ on $\R^n\setminus E_k$.
Since $E_k$ is a $K$-outward minimizer, we get that 
\[\Per_K(E_k\cap T_h(E_k))\leq \Per_K(T_h(E_k))
\quad \text{and}\quad \frac{1}{h} \int_{T_h(E_k)\setminus E_k}d_{E_k}(x)dx=0,
\]
which implies that $T_h(E_k)\subseteq \overline{ E_k}$, up to a negligible set,  recalling that $d_{E_k}<0$ on $\R^n\setminus \overline{E_k}$. 
Now,   using $E_k$ as a competitor, we observe that, since $T_h(E_k)\subseteq \overline{E_k}$ and $d_{E_k}\geq 0$ in $\overline{E_k}$, 
\[\Per_K(T_h(E_k)) \leq  \Per_K(E_k)-\frac{1}{h} \int_{E_k } d_{E_k}(x)dx+\frac{1}{h} \int_{T_h(E_k)} d_{E_k}(x)dx\leq \Per_K(E_k).\]
Let $G\supset T_h(E_k)$  such that $G\setminus T_h(E_k)\subset\subset \Omega$. Our aim is to prove that $\Per_K(T_h(E_k))\leq \Per_K(G)$. Using the minimality of $T_h(E_k)$ and $G\cap E_k$ as competitor we get, recalling that $T_h(E_k)\subseteq \overline{E_k}\cap G$ and that $d_{E_k}=0$ on $\overline{E_k}\setminus E_k$, 
\[\Per_K(T_h(E_k))\leq \Per_K(G\cap E_k)-\frac{1}{h}\int_{G\cap E_k}d_{E_k}(x)dx+\frac{1}{h}\int_{T_h(E_k)}d_{E_k}(x)dx\leq \Per_K(G\cap E_k).\]
We conclude recalling that $E_k$ is a $K$-outward minimizer so that $ \Per_K(G\cap E_k)\leq \Per_K(G)$. 
\end{proof}

\begin{proposition}\label{prop2}
Under the same assumptions of Proposition \ref{prop1}, if $E$ is also  strongly $K$-outward minimizing  set  in $\Omega$ with constant $\delta>0$,   for all $h\leq \min \left(h_0, \frac{d(E, \Omega)}{\delta}\right)$ we have
  \begin{itemize} \item if  $E$ has  empty interior, then $T_h(E)=\emptyset$; 
\item if $E$ has nonempty interior, then 
  the discrete flow $E_h (t)$  satisfies
\[ d(E_h(t), E_h(t+h))\geq \delta h \quad \text{ and }\quad H^K_{E_h(t)}(x)\geq \delta
\quad \text{ for all $t\ge 0$ and $x\in \partial E_h(t)$.}\]\end{itemize} 
\end{proposition}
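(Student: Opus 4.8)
The plan is to build on Proposition \ref{prop1}, which already gives monotonicity $E_h(t+h)\subseteq\overline{E_h(t)}$ and $K$-outward minimality of each $E_h(t)$, and to upgrade these to the strong (quantitative) versions using the constant $\delta$. I would proceed by induction on $k$, writing $E_k:=T_h^{(k)}(E)$ as in the proof of Proposition \ref{prop1}, and aim to show that each $E_k$ is \emph{strongly} $K$-outward minimizing with associated constant $\delta$, that the one-step displacement satisfies $d(E_{k},E_{k+1})\geq\delta h$, and that $H^K_{E_k}(x)\geq\delta$ on $\partial E_k$.

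First I would treat the empty-interior case. If $E$ has empty interior and is strongly $K$-outward minimizing, I would show $T_h(E)=\emptyset$ directly from the minimization \eqref{min}: taking $F=\emptyset$ as competitor gives energy $0$, while any $F$ with $E\subseteq F$ (by Proposition \ref{prop1}, $T_h(E)\subseteq\overline E$, which has empty interior, so effectively $|T_h(E)|=0$) must satisfy $\Per_K(E\cap F,\Omega)\le\Per_K(F,\Omega)-\delta|F\setminus E|$; combined with the sign of the distance term and the choice $h\le d(E,\Omega)/\delta$, the strict inequality forces the minimizer to be the empty set. The key point is that the $\delta|F\setminus E|$ gain beats the distance penalty $\frac{1}{h}\int_F d_E$ precisely when $h\delta<d(E,\Omega)$.

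Next, for the nonempty-interior case, I would re-run the competitor computation from Proposition \ref{prop1} but now keeping the $\delta$-term. Using $E_k\cap T_h(E_k)$ as competitor in \eqref{min} and invoking the \emph{strong} minimality of $E_k$ (characterization \eqref{inter} with the $\delta|G\setminus E|$ gain), one obtains
\[
\frac{1}{h}\int_{T_h(E_k)\setminus E_k} d_{E_k}(x)\,dx \;\ge\; \delta\,|T_h(E_k)\setminus E_k|,
\]
and since $d_{E_k}\le 0$ off $E_k$ the left side is $\le 0$, forcing $|T_h(E_k)\setminus E_k|=0$, i.e. $T_h(E_k)\subseteq\overline{E_k}$, as before. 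The quantitative displacement $d(E_k,E_{k+1})\geq\delta h$ I expect to extract from the Euler–Lagrange inequality for the minimizer: at a boundary point $x\in\partial T_h(E_k)$ the first variation of \eqref{min} gives $H^K_{T_h(E_k)}(x)=\frac{1}{h}d_{E_k}(x)$ in the viscosity sense, so combining with $H^K\ge\delta$ (from Theorem \ref{euler}(1) applied to the strongly $K$-outward minimizing $E_{k+1}$) yields $d_{E_k}(x)\geq\delta h$, which is exactly $d(E_k,E_{k+1})\ge\delta h$. Finally, that each $E_{k+1}$ remains strongly $K$-outward minimizing with the same constant $\delta$ I would prove by repeating the competitor argument of Proposition \ref{prop1} verbatim, carrying the $-\delta|G\setminus E_k|$ term through the chain of inequalities and using $G\cap E_k$ as competitor, so that strong minimality of $E_k$ passes to $T_h(E_k)$; then $H^K_{E_h(t)}(x)\ge\delta$ follows from Theorem \ref{euler}(1).

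The main obstacle I anticipate is making the passage "first variation of \eqref{min} gives $H^K=\frac1h d_{E_k}$" rigorous at the level of viscosity curvatures for a merely minimizing (not a priori smooth) set $T_h(E_k)$, and extracting the \emph{clean} lower bound $d(E_k,E_{k+1})\ge\delta h$ rather than an almost-everywhere or averaged statement. Here I would avoid differentiating by instead arguing variationally: test the minimality of $T_h(E_k)$ against an inward translate/perturbation supported near $\partial T_h(E_k)$, as in the construction used in Proposition \ref{pos2} and in item (5) of Theorem \ref{proprieta}, to convert the strong-minimality gain $\delta$ into a genuine geometric separation $\delta h$ between consecutive time slices.
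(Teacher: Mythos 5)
Your proposal correctly identifies the two conclusions to be upgraded (the containment and the curvature bound), but the route you take for the key quantitative step has genuine gaps. First, the propagation of \emph{strong} $K$-outward minimality with the same constant $\delta$ from $E_k$ to $E_{k+1}=T_h(E_k)$ does not follow by ``repeating the competitor argument of Proposition \ref{prop1} verbatim'': that chain gives $\Per_K(T_h(E_k))\le \Per_K(G\cap E_k)\le \Per_K(G)-\delta|G\setminus E_k|$, and since $T_h(E_k)\subseteq \overline{E_k}$ one has $|G\setminus E_k|\le |G\setminus T_h(E_k)|$, so you obtain the $\delta$-gain only over $G\setminus E_k$ and miss the layer $(G\cap E_k)\setminus T_h(E_k)$, where $d_{E_k}$ is merely nonnegative. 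The paper never claims this propagation; it proves $H^K_{E_k}\ge\delta$ by a different mechanism, namely the minimality-plus-submodularity inequality $\Per_K(G\cap E_k)\le \Per_K(G)-\frac1h\int_{G\setminus E_k}d_{E_{k-1}}\,dx$ combined with the \emph{already established} bound $d_{E_{k-1}}\ge \delta h-r$ near $\partial E_k$, fed into the first-variation contradiction argument of Theorem \ref{euler}. Since in your plan the curvature bound comes from Theorem \ref{euler}(1) applied to a strongly minimizing $E_{k+1}$, and the distance bound $d(E_k,E_{k+1})\ge\delta h$ in turn comes from that curvature bound via an Euler--Lagrange identity $H^K_{T_h(E_k)}=\frac1h d_{E_k}$ (which you do not establish in the viscosity setting), the whole quantitative part of your argument rests on the propagation step that fails; the logic is also circular in the order you present it.

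The correct mechanism --- which you mention only as a fallback in your last paragraph --- is the paper's translation argument: for $|z|<\delta h$ the set $E_1+z$ minimizes the translated functional, and testing it against $E\cap(E_1+z)$, using strong minimality of $E$ and the elementary bound $d_E(x-z)\le|z|$ for $x\notin E$, forces $|(E_1+z)\setminus E|=0$, i.e.\ $E_1+B(0,\delta h)\subseteq E$; the bound then propagates to all $k$ by the monotonicity $d(T_h(A),T_h(B))\ge d(A,B)$ of Theorem \ref{proprieta}(2). This inclusion also disposes of the empty-interior case, where your argument is incorrect as stated: a closed set with empty interior need not be Lebesgue-null, so ``$T_h(E)\subseteq\overline E$, which has empty interior, hence $|T_h(E)|=0$'' is not a valid inference, whereas $E_1+B(0,\delta h)\subseteq E$ immediately forces $E_1=\emptyset$.
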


\begin{proof}
Observe that, by the definition of the piecewise constant flow $E_h(t)$,  it is sufficient to prove the second statement for $E_k:=T_h^{(k)}(E)$ for every $k\geq 1$. 
We start considering the case $k=1$. In this case $E_1=T_h(E)$. By Proposition \ref{prop1}, we know that $E_1\subseteq \overline{E}$. 
We fix $z\in \R^n$ with $|z|< h\delta $ and observe that  $E_1+z\subseteq \overline{E}+z\subset \Omega$ since $h\delta\leq d(E, \Omega)$. Now $E_1+z$ is a solution to the minimization problem
\[\min_F \left(\Per_K(F)-\frac{1}{h}\int_{F} d_E(x-z)dx\right). \] We choose $E\cap (E_1+z) $ as a competitor and we get 
\[\Per_K(E_1+z)-\frac{1}{h} \int_{E_1+z} d_{E}(x-z)dx\leq  \Per_K(E\cap (E_1+z))-\frac{1}{h} \int_{E\cap (E_1+z)} d_{E}(x-z)dx.\]
Since $E$ is a strongly $K$-outward minimizer we get
\[ \Per_K(E\cap (E_1+z))\leq  \Per_K(E_1+z)-\delta |(E_1+z)\setminus E|. \]
Substituting in the previous inequality we get 
\[ \delta |(E_1+z)\setminus E|\leq   \frac{1}{h} \int_{ (E_1+z)\setminus E} d_{E}(x-z)dx.\]
Finally for $x\not\in E$, by definition \[d_E(x-z)=d(x-z, \R^n\setminus E)-d(x-z, E)\leq d(x-z, \R^n\setminus E)\leq d(x-z, x)=|z|.\] 
Therefore in the previous inequality we get
\[\delta |(E_1+z)\setminus E|\leq   \frac{1}{h} |z| |(E_1+z)\setminus E|<\delta |(E_1+z)\setminus E|\] 
which implies that $|(E_1+z)\setminus E|=0$  for every $z$ with $|z|<\delta h$, that is
\[E_1+B(0, \delta h)\subseteq E.\] Note  that  if $\text{int }E =\emptyset$,  then by the previous inclusion  we get that necessarily $E_1=\emptyset$. \\
If $\text{int }E \not=\emptyset$, we have that
\[E_1\subseteq E\qquad \text{and} \qquad d(E_1, E)\geq h\delta.
\]    
By Theorem \ref{proprieta},  we then get 
\[E_2=T_h(E_1)\subseteq T_h(E)=E_1\qquad \text{and }d(E_2, E_1)\geq h\delta.\]
So by iteration we obtain \[E_k\subseteq E_{k-1} \qquad \text{and }\qquad d(E_k, E_{k-1})\geq  h\delta .\]

Finally we fix $k\geq 1$ and we claim that for   any $\lambda\in (0,1)$, there holds 
\[H^K_{E_k}(x)\geq \delta(1-\lambda) \text{ in viscosity sense, for all $x\in \partial E_k$}.\]  So, sending $\lambda\to 0$ we get the statement. 

The minimality of $E_k=T_h(E_{k-1})$ and the submodularity of the perimeter \eqref{sub} give  that for all $G$
\begin{eqnarray}\nonumber  \Per_K(G\cap E_k)&\leq & \Per_K(G)+\Per_K( E_{k}) -\Per_K(E_k\cup G)\\ &\leq & \Per_K(G)-\frac{1}{h}\int_{G\setminus E_k}d_{E_{k-1}}(x)dx.
\label{g1}
\end{eqnarray} 

We proceed as in the proof of Theorem \ref{euler}, item (1). We fix $\lambda\in (0,1)$ and we assume by contradiction that there  exists  $F\subseteq E_k$ with $\partial F\in C^{1,1}$, $x_0\in \partial E_k \cap \partial F$, such that $H_F^K(x_0)\leq \delta(1-\lambda) -2\rho$ for some $\rho>0$ small.  Then by continuity of $H^K$ there exists $r_0>0$ such that $H_F^K(x)\leq \delta(1-\lambda)-\rho$ for every $x\in\partial F\cap B(x_0,r_0)$.
We fix \begin{equation}\label{erre} r<\min\left(r_0,\frac{ h\delta\lambda}{2}\right),\end{equation}  so that $B(x_0, r)\subset\subset E_{k-1}$ (since $d(E_k, E_{k-1})\geq \delta h$) and we construct  a $1$-parameter family $\Phi_\eps$ of $C^{1,1}$ diffeomorphisms, such that  $F=\Phi_0(F)\subseteq \Phi_\eps(F)\subset E$, $|\Phi_\eps(F)\setminus E_k|>0$  and  $\Phi_\eps(F)\setminus E_k \subseteq \Phi_\eps(F)\setminus F\subset\subset  B(x_0, r)\subset\subset E_{k-1}$ for every $\eps\in (0, \eps_0)$. Again by continuity there holds   $H^K_{\Phi_\eps(F)}(x)\leq \delta(1-\lambda)-\rho/2$ for all $x\in \partial  \Phi_\eps(F)\setminus F$. Using the fact that $H^K$ is the first variation of $\Per_K$ with respect to $C^{1,1}$ diffeomorphisms,  as in the proof of Theorem \ref{euler} (see \eqref{un}, \eqref{do}), we get 
\[\Per_K(E_{k}\cap \Phi_\eps(F))\geq  \Per_K(\Phi_\eps(F))
+\left(-\delta(1-\lambda)+\frac{\rho}{2}\right) |\Phi_\eps(F)\setminus E_k|.\] 
 By the previous inequality and \eqref{g1} applied to $G=\Phi_\eps(F)$ we get 
 \[ \Per_K(\Phi_\eps(F))-\frac{1}{h}\int_{\Phi_\eps(F)\setminus E_k}d_{E_{k-1}}(x)dx\geq  \Per_K(\Phi_\eps(F))
+\left(-\delta(1-\lambda)+\frac{\rho}{2}\right) |\Phi_\eps(F)\setminus E_k|
 \] from which we deduce
\[  \frac{1}{h}\int_{\Phi_\eps(F)\setminus E_k}d_{E_{k-1}}(x)dx\leq \left(\delta(1-\lambda)-\frac{\rho}{2}\right) |\Phi_\eps(F)\setminus E_k|.\]
Observe that $\Phi_\eps(F)\setminus E_k\subseteq B(x_0, r)$ and then, recalling \eqref{erre},  $d_{E_{k-1}}(x)\geq h\delta-r>h\left(\delta-\frac{\delta\lambda}{2}\right) $ for all $x\in \Phi_\eps(F)\setminus E_k$. Therefore
  we get 
\[\left(\delta -\frac{\delta\lambda}{2}\right)|\Phi_\eps(F)\setminus E_k|< \left(\delta-\delta \lambda-\frac{\rho}{2}\right) |\Phi_\eps(F)\setminus E_k|\]   
which implies $\Phi_\eps(F)\subseteq E_k$, in contradiction with our construction. 
\end{proof} 

 We now prove the main result of this section, about the flow of $K$-outward minimizing sets.   
 
 \begin{theorem}\label{presmin}  
 Let $\Omega$ be an open set, $E$ a bounded set with  $E\subset\subset \Omega$.  Assume that $E$ is  a  strongly $K$-outward minimizing  set  in $\Omega$ with constant $\delta>0$.
  Then for every $t>0$ up to a countable set,  we have
 \[E_h(t) \to E^-(t) \qquad \text{ in $L^1(\Omega)$,\ as $h\to 0$. }\] 
  Moreover,
  $E^-(t)$ is a $K$-outward minimizing set in $\Omega$ for every $t>0$, and
\[ E^-(t+s)\subseteq  E^-(t)\ \text{ with }\ d(E^-(t+s), E^-(t))\geq \delta s \qquad \text{  for every  $t,s> 0$}. \]  Moreover $\cap_{s<t}E^-(s)\setminus E^-(t)$ has empty interior for all $t>0$, $|\cap_{s<t}E^-(s)\setminus E^-(t)|=0$ for every $t>0$ up to a countable set,  
  and 
 \[ H^K_{E^-(t)}(x)\geq \delta\qquad\text{for all $x\in  \partial E^-(t)$.} \]

Finally if  $E$ has boundary of class $C^{1,1}$, the same result holds also  for the outer flow $E^+(t)$ and   $E^{+}(t)\setminus E^-(t)$ has empty interior for all $t>0$.
\end{theorem}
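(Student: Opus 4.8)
The plan is to transfer to the limit the discrete properties proved in Propositions \ref{prop1} and \ref{prop2}, using the convergence of the minimizing movement scheme from Theorem \ref{convschema}. I would take $u_0=d_E$, so that $E=\{u_0\geq0\}$ and the solution $u$ of \eqref{levelset} satisfies $E^+(t)=\{u(\cdot,t)\geq0\}$ and $E^-(t)=\{u(\cdot,t)>0\}$. By Theorem \ref{convschema} the functions $u_h(\cdot,t)$ converge to $u(\cdot,t)$ locally uniformly, and the superlevel sets $\{u_h(\cdot,kh)\geq0\}$ agree (up to null sets) with the maximal discrete flow $E_h^+(kh)$. Hence, at every time $t$ with $|\{u(\cdot,t)=0\}|=0$, local uniform convergence forces $E_h^+(t)\to E^+(t)$ and $E_h^-(t)\to E^-(t)$ in $L^1(\Omega)$; since $E^+(t)$ and $E^-(t)$ then coincide up to a null set and every interpolating flow satisfies $E_h^-(t)\subseteq E_h(t)\subseteq E_h^+(t)$ by Theorem \ref{proprieta}(1), we obtain $E_h(t)\to E^-(t)$. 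The exceptional times are those at which the monotone (by Proposition \ref{prop1}) function $t\mapsto|E^+(t)|$ is discontinuous, and these are at most countably many.

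Once the convergence is available, I would read off the remaining properties. Each $E_h(t)$ is $K$-outward minimizing in $\Omega$ by Proposition \ref{prop1}, so the $L^1$-stability of Proposition \ref{conv} shows that $E^-(t)$ is $K$-outward minimizing. For the monotonicity and the sharp rate, I would iterate the one-step estimate $d(E_h(t),E_h(t+h))\geq\delta h$ of Proposition \ref{prop2} into the inclusion $E_h(t+s)+B(0,\delta s)\subseteq E_h(t)$ and pass it to the $L^1$ limit along good times, yielding $E^-(t+s)\subseteq E^-(t)$ with $d(E^-(t+s),E^-(t))\geq\delta s$. The curvature bound $H^K_{E^-(t)}(x)\geq\delta$ then follows from this distance estimate through Proposition \ref{pos2}, applied to the flow restarted from $E^-(t)$: by the semigroup property and the comparison principle of Proposition \ref{cpgeometric}, the outer evolution of $E^-(t)$ is squeezed between $E^-(t+\cdot)$ and $E^+(t+\cdot)$, so the hypothesis of Proposition \ref{pos2} is met.

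For the fine structure of the level sets, the identity $|\cap_{s<t}E^-(s)\setminus E^-(t)|=0$ for all but countably many $t$ is just the statement that $t\mapsto|E^-(t)|$, being monotone, is continuous off a countable set. The empty interior of $\cap_{s<t}E^-(s)\setminus E^-(t)$ for every $t$ I would obtain exactly as in Proposition \ref{chara}, combining the inclusion $E^-(t)\subseteq\cap_{s<t}E^-(s)$ with the distance estimate and the lower semicontinuity of the volume along the flow, which prevents a persistent open region from lying between nearby time slices. Finally, when $\partial E\in C^{1,1}$, Theorem \ref{euler}(1) gives $H^K_E\geq\delta$ and the continuity of $H^K$ under $C^{1,1}$ convergence upgrades $E$ to a strongly $K$-mean convex set; Proposition \ref{chara} then applies directly to furnish the monotonicity of $E^+(t)$ and the emptiness of the interior of $E^+(t)\setminus E^-(t)$, while the resulting absence of fattening makes $E_h(t)\to E^+(t)=E^-(t)$ at every $t$.

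The main obstacle I anticipate is the first step: correctly identifying the $L^1$ limit of the discrete flow with the \emph{inner} flow $E^-(t)$ rather than $E^+(t)$, and controlling the countable set of fattening times. Closely related is the careful use of the semigroup property when invoking Proposition \ref{pos2}, since this is what produces the sharp bound $H^K\geq\delta$ rather than the weaker $H^K\geq0$ coming directly from outward minimality.
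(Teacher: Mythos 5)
Your overall skeleton (discrete-to-continuous limit, $L^1$-stability of outward minimality from Proposition \ref{conv}, Proposition \ref{pos2} for the curvature bound, Theorem \ref{euler} plus Proposition \ref{chara} for the $C^{1,1}$ case) matches the paper, but the proposal is missing the one device that makes each step close: the \emph{discrete arrival-time function} $u_h(x)=h\sum_{k\geq 0}\chi_{T_h^{(k)}(E)}(x)$. Because consecutive discrete sets are separated by $\delta h$ (Proposition \ref{prop2}), the $u_h$ are equi-Lipschitz with constant essentially $1/\delta$ and converge uniformly to a Lipschitz function $u$ of $x$ alone; since $T_h^{(k)}u_h=u_h-hk$, Theorem \ref{convschema} identifies $u(x)-t$ as the solution of \eqref{levelset} with datum $u$, whence $E^-(t)=\{u>t\}$ and $\cap_{s<t}E^-(s)=\{u\geq t\}$ for \emph{every} $t$. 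All the assertions are then read off from this single function: countability of the bad times is automatic ($\sum_t|\{u=t\}|\leq |E|$), the inclusion $E^-(t+s)\subseteq E^-(t)$ with $d\geq \delta s$ holds for all $t,s$, and the outer evolution of $\{u>t\}$ is exactly $\{u\geq t+s\}$, which is what allows Proposition \ref{pos2} to be applied.

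Without this, several of your steps do not go through as written. First, you justify the countability of the exceptional times by the monotonicity of $t\mapsto |E^+(t)|$ ``by Proposition \ref{prop1}'' --- but that proposition concerns the discrete flow; monotonicity of the continuous outer flow of a nonsmooth strongly outward minimizing set is not established (the authors only conjecture it in a remark after the theorem), and Remark \ref{fattrem} shows that a positive viscosity curvature bound alone does not even give $E^+(t)\subseteq E$. Second, passing $d(E_h(t),E_h(t+s))\geq \delta s-2\delta h$ to the limit ``along good times'' yields the distance estimate only for a.e.\ $t,s$, whereas the statement requires it for all $t,s>0$; the upgrade uses the identity $E^-(t)=\{u>t\}$. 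Third, to apply Proposition \ref{pos2} to $E^-(t)$ you need its \emph{outer} evolution to lie inside $E^-(t)$ at distance $\geq\delta s$; squeezing it below $E^+(t+s)$ does not suffice, because $E^+(t+s)\subseteq E^-(t)$ is precisely the unproved nesting from the first point. Fourth, the empty interior of $\cap_{s<t}E^-(s)\setminus E^-(t)$ for \emph{every} $t$ cannot come from volume (semi)continuity, which only controls the countably many jump times; the paper instead assumes a ball sits inside $\{u=t\}$ and runs a shrinking-ball strict subsolution through Proposition \ref{subgeometrico} to reach a contradiction. Your treatment of the $C^{1,1}$ case is essentially the paper's.
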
 
\begin{proof} 
Note that by Proposition \ref{prop2} and Remark \ref{ineout} we may assume $\text{int }E\not=\emptyset$, otherwise the statement is trivial. 

We divide the proof in several steps. 

\noindent {\bf Step 1: definition of a continuous minimal time function $u$}.

We recall that  $E_h (t) =T_h^{(k)}(E)$, for $t\in [kh, (k+1)h)$. 
We define the discrete  arrival time function as follows
\begin{equation}\label{arrivaldiscrete}
u_h(x)=\begin{cases} h\sum_{k\geq 0} \chi_{E_k}(x)=\int_0^{+\infty} \chi_{E_h(t)}dt & x\in E\\ 0 &x\in \R^n\setminus E.\end{cases} \end{equation} 
Note that by    Proposition \ref{prop2},  $u_h$ is well defined and \[ \left\{x\text{ s.t. }u_h(x)>t\right\}=E_h(t). \] 

By its very definition, we get that
\begin{equation}\label{th}
T_h^{(k)}u_h(x)= u_h(x)-hk\end{equation}
where $T_h^{(k)}u_h(x)$ is defined as in \eqref{thu}. 

  Moreover by    Proposition \ref{prop2}we get that $d(T_h^{(k)}(E),T_h^{(k')}(E))\geq \delta h(|k-k'|-1)$. 
Let $x\in T_h^{(k)}(E)$ and $y\in T_h^{(k')}(E)$, 
\[|u_h(x)-u_h(y)|= h|k'-k| \leq \frac{d(T_h^{(k)}(E), T_h^{(k')}(E))}{\delta}+h\leq \frac{|x-y|}{\delta}+h.\] This implies that up to a subsequence $u_h\to u$ uniformly as $h\to 0$, where $u:\R^n\to \R$ is a Lipschitz continuous function such that $u=0$ in $\R^n\setminus E$ and $|u(x)-u(y|\leq \frac{|x-y|}{\delta}$.

\vspace{0,1cm}

\noindent {\bf Step 2:  for all $t>0$ $E^-(t)=\{x\text{ s.t. } u(x)> t\} $}.

 Note that since $u_h\to u$ uniformly then it is also true that  $\|T_h u_h- T_h u\|_\infty\to 0$ as $h\to +\infty$ and then also 
 $\|T_h^{(k)}u-T_h^{(k)}u_h\|_\infty\to 0$ as $h\to 0$ for all $k\geq 1$, where $T_h u, T_h^{(k)}u$ are defined as in \eqref{thu}. 
 Therefore by Theorem \ref{convschema} we conclude that
 \[T_h^{\left[\frac{t}{h}\right]}u_h(x)\to u(x,t)\]   locally uniformly in $\R^n\times [0, +\infty)$  as $h\to 0$
 where $u(x,t)$ is the unique viscosity  solution to \eqref{levelset} with initial datum $u$. 

On the other side, by \eqref{th} we get that  \[T_h^{\left[\frac{t}{h}\right]}u_h(x)\to u(x)-t\] locally uniformly. 
This implies that $u(x)-t$ is the unique viscosity solution to \eqref{levelset} with initial datum $u$ and in particular, since the operator is geometric and the level set $\{u(x)>0\}$ coincide with the level set $\{d_E(x)>0\}$ we conclude that
\[E^-(t)= \{x\text{ s.t. }u(x)>t\} \qquad \forall t>0.\]
Note that by this equality we deduce also that \[\bigcap_{s<t}E^-(s)=\{x\text{ s.t. }u(x)\geq t\},\] and that the limit $u$ of $u_h$ is unique, so  the whole family  $u_h$ converges to $u$ uniformly as $h\to 0$. 
 By its characterization, we get also that $ E^-(t+s)\subseteq \{x\text{ s.t. }u(x)\geq t+s\}\subseteq  E^-(t)$  for all $s>0$. 
 
\vspace{0,1cm}

\noindent {\bf Step 3:  $L^1$ convergence and $K$-outward minimality property of $E^-(t)$}.

 By uniform convergence of $u_h\to u$, we get for all $t>0$ we have
\[E^-(t)=\{x\text{ s.t. }u(x)>t\} \subseteq \lim_{h\to 0}E_h(t)\subseteq \{x\text{ s.t. }u(x)\geq t\}=\bigcap_{s<t}E^-(s) \]   where the limit is taken in the $L^1$ sense. 
Since $u$ is Lipschitz continuous, we know that  $|\{x\text{ s.t. }u(x)=t\}|=0$ for almost every $t>0$, which implies that
$E_h(t)\to E^-(t)$  in $L^1(\R^n)$ for almost every $t>0$. 
Moreover by stability with respect to $L^1$ convergence of $K$-outward minimizing sets see Proposition \ref{conv}, since $E_h(t)$ are $K$-outward minimizers in $\Omega$ by Proposition \ref{prop1} we conclude that 
also $E^-(t)$, and  $\bigcap_{s<t}E^-(s)$ are  $K$-outward minimizer sets in $\Omega$ for almost every $t>0$.

Now we observe that  $E^-(t)$ is a $K$-outward minimizer set in $\Omega$ for every  $t>0$ again by stability under $L^1$ convergence, since  $E^-(t)=\cup_{s>0}E^-(t+s)=\lim_{s\to 0^+}(E^-(t+s))$. Then  also $\bigcap_{s<t}E^-(s)=\lim_{s\to t^-} E^-(s)$ is a $K$-outward minimizer set in $\Omega$ for every  $t>0$.
 
 \vspace{0,1cm}

\noindent {\bf Step 4: $K$-curvature   of $E^-(t)$.}

Since $E$ is strongly $K$-outward minimizer with $\delta>0$ then by Proposition \ref{prop2} we get that 
\[d(E_h(t), E_h(t+s))\geq \delta  \left(h\left[\frac{t+s}{h}\right]-h\left[\frac{t}{h}\right]-h\right)\geq s\delta -2h \delta.\]
 Then
 \[E_h(t+s)+B(0, \delta s-2h\delta)\subseteq E_h(t).\] Passing to the limit as $h\to 0$ we get that  for almost every $t,s>0$
\begin{equation}\label{mon} d(\{ u(x)\geq t+s\}, \{u(x)>t\})  \geq \delta s.\end{equation}   Arguing as before, we get that this inequality holds for all $s,t>0$. 

We apply now Theorem \ref{pos2}, choosing as initial set  $\{u(x)>t\}$ and observing that the outer flow at time $s>0$ of $\{u(x)>t\}$   is given by $\{u(x)\geq t+s\}$. So we get that   \begin{equation}\label{ccc}H^K_{\{u(y)>t\}}(x)\geq \delta\qquad \text{in viscosity sense for all $x\in \partial \{u(y)>t\}$ and for all $t>0$. }\end{equation} 
 
  \vspace{0,1cm}

\noindent {\bf Step 5: the set $\{x\text{ s.t. }u(x)=t\}$.}

We show  that for all $t>0$
\[\text{ int } \left( \{x\text{ s.t. }u(x)\geq t\}\setminus\{x\text{ s.t. }u(x)>t\}\right)=\text{ int }  \{x\text{ s.t. }u(x)=t\}=\emptyset.\]  We assume by contradiction that there exists $z$ and $r>0$ such that $B(z,r)\subset\subset  \{x\text{ s.t. }u(x)=t\}\subseteq  \{x\text{ s.t. }u(x)\geq t\}$. Let 
\[\alpha:=\max_{k\in [r/2, r]}\max_{y\in \partial B(0, k)} H^K_{B(0, k)}(y).\] Note that by definition of curvature, then $\alpha=\max_{y\in \partial B(0, r/2)} H^K_{B(0, r/2)}(y)>0$. Let $s_0>0$ such that $r-\alpha s_0>r/2$, and define the flow $B(s)= B(z,r-\alpha s)$ for $s\in [0, s_0]$. Then we get that $B(s)$ is a strict subsolution to \eqref{kflow} since 
$H^K_{B(s)}(y)\leq 2\alpha$ for every $s\in [0, s_0]$. Recalling that $u(x)-t$ is a viscosity solution to \eqref{levelset}, we conclude by Proposition \ref{subgeometrico}, that $B(z, r-\alpha s)=B(s)\subseteq  \{x\text{ s.t. }u(x)\geq t+s\}$. This implies that $t=u(z)\geq t+s$ for all $s\in [0, s_0]$ which is not possible. 

Moreover, observe that  by \eqref{mon}, the set of $t>0$ where $|\{x\text{ s.t. }u(x)=t\}|>0$ coincides with the set of jumps of the strictly decreasing function $t\to |E^-(t)|$.  Therefore, 
this set is countable. 

 \vspace{0,1cm}

\noindent {\bf Step 6: case of $E$ with $C^{1,1}$ boundary.}

Note that if $E$ is strongly $K$-outward minimizing, then by Theorem \ref{euler}, $H^K_E(x)\geq \delta$ for all $x\in\partial E$. Since $E$ has boundary of class $C^{1,1}$, then it is also a strongly $K$-mean convex set, see Remark \ref{regularset}. Therefore  by  Proposition \ref{chara}, item (1) we get that   $E^{+}(t)\subseteq E^-(t-s)$ for every $t>s>0$   and so 
\[E^{+}(t)\subseteq \bigcap_{0<s<t}E^-(t-s)=\bigcap_{0<s<t}\{x\text{ s.t }u(x)>t-s\}=\{x\text{ s.t }u(x)\geq t\}\subseteq E^+(t).\] This implies that for all $t>0$, $\{x\text{ s.t. }u(x)\geq t\}=E^+(t)$. 
\end{proof} 
 \begin{remark}\upshape  If  the outer flow satisfies  $E^+(t)\subset \subset  E$, for $t>0$,  then  the same results as in Theorem \ref{presmin} hold also for the outer flow $E^+(t)$, since we may prove that $\{x\text{ s.t. }u(x)\geq t\}=E^+(t)$, arguing exactly as in Step 5 of the proof. In particular we would  get that $E^+(t)\setminus E^-(t)$ has empty interior for all $t$. 
  
 We expect this monotonicity property to hold true for the flow starting by a strongly $K$-outward minimizer. 
 
\end{remark}

\begin{remark}\upshape In Theorem \ref{presmin} we show that the volume function
\[t\to |E^-(t)|\] is strictly decreasing. 
We expect that this function is also continuous, as it happens in the local case. 
 \end{remark} 
 
 We conclude with a corollary about the convergence of the $K$-perimeter of the discrete flow to the $K$-perimeter of the limit level set flow
 (we refer to \cites{dephilaux,chn} for analogous results in the local case).
  
 \begin{corollary}  Let $\Omega$ be a  domain and $E\subset\subset \Omega$   be a   strongly $K$-outward minimizing  set  in $\Omega$ with constant $\delta>0$.
  Then for every $T>0$ \[\int_0^T \Per_K(E_h(t) )dt \to \int_0^T \Per_K(E^-(t))dt  \qquad \text{  as $h\to 0$ }\] where $E_h(t)$ is any piecewise constant flow defined as in \eqref{pol} and $E^-(t)$ is the viscosity inner  flow as defined in \eqref{outin}.
 \end{corollary}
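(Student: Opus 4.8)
The plan is to establish the convergence by squeezing the time-integrated discrete perimeter between two time-integrated perimeters of the limit flow, exploiting that \emph{both} the discrete sets $E_h(t)$ and the limit sets $E^-(t)$ are $K$-outward minimizing in $\Omega$. First I would record the uniform bounds and the set-theoretic sandwich. By the monotonicity in Proposition \ref{prop1}, $\Per_K(E_h(t))\le\Per_K(E_h(0))=\Per_K(E)$ for every $t$ and every admissible $h$, so the integrands are dominated by the integrable constant $\Per_K(E)$ on $[0,T]$. Next, using the uniform convergence $u_h\to u$ established in the proof of Theorem \ref{presmin} together with the identities $E_h(t)=\{u_h>t\}$ and $E^-(s)=\{u>s\}$, for every $\eps>0$ there is $h_0(\eps)>0$ such that $\|u_h-u\|_\infty<\eps$ for $h<h_0(\eps)$, whence the inclusions
\[ E^-(t+\eps)\subseteq E_h(t)\subseteq E^-(t-\eps)\qquad\text{for all }t>\eps,\ h<h_0(\eps) \]
hold simultaneously in $t$ (up to negligible sets). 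Since $E_h(t)\subseteq\overline E$ and $E^-(s)\subseteq E$ are all compactly contained in $\Omega$, throughout the argument $\Per_K(\cdot,\Omega)$ and $\Per_K(\cdot)$ coincide on the sets involved.

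The core step is to turn this set sandwich into a perimeter sandwich via $K$-outward minimality. On the one hand $E^-(t+\eps)$ is $K$-outward minimizing in $\Omega$ by Theorem \ref{presmin}, and $E_h(t)\supseteq E^-(t+\eps)$ with $E_h(t)\setminus E^-(t+\eps)\subset\subset\Omega$, so Definition \ref{outward} gives $\Per_K(E^-(t+\eps))\le\Per_K(E_h(t))$. On the other hand $E_h(t)$ is $K$-outward minimizing in $\Omega$ by Proposition \ref{prop1}, and $E^-(t-\eps)\supseteq E_h(t)$ with $E^-(t-\eps)\setminus E_h(t)\subset\subset\Omega$, so Definition \ref{outward} gives $\Per_K(E_h(t))\le\Per_K(E^-(t-\eps))$. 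Combining,
\[ \Per_K(E^-(t+\eps))\le\Per_K(E_h(t))\le\Per_K(E^-(t-\eps))\qquad\text{for all }t>\eps,\ h<h_0(\eps). \]

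Finally I would integrate on $[0,T]$ (the complementary interval $[0,\eps]$ contributes $O(\eps)$ since the integrands are bounded by $\Per_K(E)$), let $h\to0$, and then $\eps\to0$. The integration and the limit $h\to0$ yield
\[ \int_0^T\Per_K(E^-(t+\eps))\,dt\le\liminf_{h\to0}\int_0^T\Per_K(E_h(t))\,dt\le\limsup_{h\to0}\int_0^T\Per_K(E_h(t))\,dt\le\int_0^T\Per_K(E^-(t-\eps))\,dt. \]
To close the argument I note that $\tau\mapsto\Per_K(E^-(\tau))$ is non-increasing: for $\tau'>\tau$ one has $E^-(\tau')\subseteq E^-(\tau)$ with compactly contained difference, and the $K$-outward minimality of $E^-(\tau')$ applied to the competitor $E^-(\tau)$ gives $\Per_K(E^-(\tau'))\le\Per_K(E^-(\tau))$. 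A monotone function is continuous off a countable set, so $\Per_K(E^-(t\pm\eps))\to\Per_K(E^-(t))$ for a.e. $t$ as $\eps\to0$; being bounded by $\Per_K(E)$, dominated convergence forces both the left- and right-hand integrals above to converge to $\int_0^T\Per_K(E^-(t))\,dt$. The squeeze then yields the claim.

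I expect the genuine content to lie in the upper (limsup) inequality, namely the passage $\Per_K(E_h(t))\le\Per_K(E^-(t-\eps))$: the lower bound is already available for free from lower semicontinuity of $\Per_K$ and Fatou, but it is the \emph{symmetric} outward-minimality comparison that upgrades one-sided semicontinuity to the exact convergence of energies. The only remaining care is the order of limits $h\to0$ then $\eps\to0$, which is handled cleanly by the monotonicity, hence a.e. continuity, of $\tau\mapsto\Per_K(E^-(\tau))$ recorded above.
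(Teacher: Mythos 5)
Your argument is correct, and it reaches the conclusion by a genuinely different route from the paper. The paper works at the level of the arrival-time functions: it proves the lower bound by Fatou and lower semicontinuity of $\Per_K$ (as you note), and for the upper bound it introduces the functional $J_K(v)=\frac12\iint|v(x)-v(y)|K(x-y)\,dx\,dy$, observes via the coarea formula that the $K$-outward minimality of every level set $E_h(s)=\{u_h>s\}$ is equivalent to the one-sided minimality $J_K(u_h)\le J_K(v)$ for all $v\ge u_h$ with compact support in $\Omega$, and then tests with $v=(u+\eps)\chi_E$ and the subadditivity $J_K(u+\eps\chi_E)\le J_K(u)+\eps\Per_K(E)$ to get $J_K(u_h)\le J_K(u)$, which by coarea is exactly $\int_0^\infty\Per_K(E_h(t))\,dt\le\int_0^\infty\Per_K(E^-(t))\,dt$. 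Your pointwise-in-$t$ squeeze replaces the coarea/competitor-function step by the two-sided comparison $\Per_K(E^-(t+\eps))\le\Per_K(E_h(t))\le\Per_K(E^-(t-\eps))$, which exploits the outward minimality of \emph{both} families together with the uniform convergence $u_h\to u$; the $\eps$-shift in your inclusions plays exactly the role of the $+\eps$ in the paper's competitor. Each approach has something to offer: the paper's is shorter and sidesteps the bookkeeping with the monotone function $\tau\mapsto\Per_K(E^-(\tau))$, while yours yields the strictly stronger conclusion that $\Per_K(E_h(t))\to\Per_K(E^-(t))$ for every continuity point $t$ of that monotone function, hence for all but countably many $t$, not merely convergence of the time integrals. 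The only points to make explicit in a write-up are that $\Per_K(E)<+\infty$ (so that the constant majorant in the dominated convergence step is integrable, and the $O(\eps)$ estimate on $[0,\eps]$ is meaningful) and that the limit $u$ of the arrival-time functions is independent of the particular selection of minimizers defining $E_h(t)$, both of which are supplied by Theorem \ref{presmin} and its proof.
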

 
 \begin{proof} 
 By Theorem \ref{presmin}, $E_h(t)\to E^-(t)$ in $L^1(\Omega)$ for almost every $t$, therefore by lower semicontinuity of $\Per_K$ with respect to $L^1$ convergence and Fatou lemma, we get that for every $T>0$, 
 \begin{equation}\label{lsc}
 \liminf_{h\to 0} \int_0^T \Per_K(E_h(t) )dt\geq  \int_0^T \Per_K(E^-(t))dt.\end{equation} 
 
 We now introduce the functional 
   \begin{equation}\label{funz1}
J_K(v):=\frac 12 \int_{\R^n}\!\!\int_{\R^n} |v(x)-v(y)|K(x-y)dxdy \qquad v \in L^1_{loc}(\R^n).
\end{equation} 
Note that $J_K(\chi_E)=\Per_K(E)$ for all measurable $E\subset \R^n$.
The coarea formula \cite[Proposition 2.3]{cn} states that  
\begin{equation}\label{coarea} J_K(v)= \int_{-\infty}^{+\infty} \Per_K(\{v>s\})ds\end{equation}
 for  all $v \in L^1_{loc}(\R^n)$.
 
Let $u_h$ as defined in \eqref{arrivaldiscrete} and we claim that
\begin{equation}\label{1var} J_K(u_h)\leq J_K(v) \qquad \text{ for all $v\in L^1_{loc}(\R^n)$, $v\geq u_h$ and $\text{supp } v\subset\subset  \Omega$.}\end{equation} 
The proof of this claim is a direct consequence of the coarea formula and the fact that $E_h(t)$ is $K$-outward minimizer for every $t$, by  Proposition \ref{prop1}. Indeed, since $u_h\leq v$, there holds for every $s>0$ that 
\[E_h(s)=\{x\text{ s.t. }u_h(x)>s\}\subseteq \{x\text{ s.t. }v(x)>s\}\subset\subset \Omega\] which implies, since $E_h(t)$ is a $K$-outward minimizing set, that 
\[\Per_K(E_h(s))=\Per_K(\{x\text{ s.t. }u_h>s\})\leq \Per_K( \{x\text{ s.t. }v>s\}).\]  Integrating for $s\in (0, +\infty)$, and recalling \eqref{coarea}, we get the conclusion. 

Now, we use the same argument as in \cite[Proposition 5.1]{dephilaux}. We recall that by Theorem \ref{presmin}, $u_h\to u$ uniformly as $h\to 0$, where $u$ is Lipschitz continuous and $u=0$ in $\R^n\setminus E$. By uniform convergence we get that  for any $\eps>0$ there exists $h_0$ such that $u_h\leq u+\eps$ for all $h<h_0$.  
Let   $v(x):= (u(x)+\eps)\chi_Ei(x)$, so $v(x)\geq u_h(x)$ by construction and moreover $\text{supp }v=E\subset\subset  \Omega$. 

Therefore by \eqref{1var} there holds that
\begin{eqnarray*} J_K(u_h) &\leq &J_K(v)=J_K((u+\eps)\chi_E)\\ &\leq & J_K(u)+  J_K(\eps\chi_E)= J_K(u)+\eps\Per_K(E).  \end{eqnarray*} 
Sending  $\eps \to 0$  we  conclude that 
\begin{equation} \label{prova} J_K(u_h) \leq J_K(u)\end{equation} 
Recalling that $E_h(t)=\{x\text{ s.t. } u_h(x)>t\}$ and $E^-(t)=\{x\text{ s.t. } u(x)>t\}$, \eqref{prova}, by the coarea formula, coincides with
\[\int_0^{+\infty} \Per_K(E_h(t))dt\leq \int_0^{+\infty}  \Per_K(E^-(t))dt\qquad \text{for all $h\leq h_0$.} \]
This inequality, together with \eqref{lsc}, gives the thesis.
 \end{proof} 
 
  \begin{bibdiv}
\begin{biblist}
 
 \bib{ATW}{article}{
 AUTHOR = {Almgren, Fred},
 author={Taylor, Jean E.}, 
 author={ Wang, Lihe},
     TITLE = {Curvature-driven flows: a variational approach},
   JOURNAL = {SIAM J. Control Optim.},
  FJOURNAL = {SIAM Journal on Control and Optimization},
    VOLUME = {31},
      YEAR = {1993},
    NUMBER = {2},
     PAGES = {387--438},
 }

\bib{belle}{article}{
   author={Bellettini, Giovanni},
   author={Caselles, Vicent},
   author={Chambolle, Antonin},
   author={Novaga, Matteo},
   title={Crystalline mean curvature flow of convex sets},
   journal={Arch. Ration. Mech. Anal.},
   volume={179},
   date={2006},
   number={1},
   pages={109--152},
}
	 
\bib{MR2675483}{article}{
   author={Caffarelli, Luis},
   author={Roquejoffre, Jean-Michel},
   author={Savin, Ovidiu},
   title={Nonlocal minimal surfaces},
   journal={Comm. Pure Appl. Math.},
   volume={63},
   date={2010},
   number={9},
   pages={1111--1144},
}

\bib{cn}{article}{
   author={Cesaroni, Annalisa},
   author={Novaga, Matteo},
   title={The isoperimetric problem for nonlocal perimeters},
   journal={Discrete Contin. Dyn. Syst. Ser. S},
   volume={11},
   date={2018},
   number={3},
   pages={425--440},
}

\bib{cnself}{article}{ 
    AUTHOR = {Cesaroni, Annalisa},
    author = {Novaga, Matteo},
     TITLE = {Symmetric {S}elf-{S}hrinkers for the {F}ractional {M}ean
              {C}urvature {F}low},
   JOURNAL = {J. Geom. Anal.},
     VOLUME = {30},
      YEAR = {2020},
    NUMBER = {4},
     PAGES = {3698--3715},
}
\bib{fattening}{article}{
    AUTHOR = {Cesaroni, Annalisa},
    author={Dipierro, Serena},
    author={Novaga, Matteo}, 
    author={Valdinoci, Enrico},
     TITLE = {Fattening and nonfattening phenomena for planar nonlocal
              curvature flows},
   JOURNAL = {Math. Ann.},
  FJOURNAL = {Mathematische Annalen},
    VOLUME = {375},
      YEAR = {2019},
    NUMBER = {1-2},
     PAGES = {687--736},     
}
	
\bib{cdnp}{article}{
    AUTHOR = {Cesaroni, Annalisa},
    author={De Luca, Lucia},
    author={Novaga, Matteo}, 
    author={Ponsiglione, Marcello},
     TITLE = {Stability results for nonlocal geometric evolutions and limit cases for fractional mean curvature flows},
   JOURNAL = {arxiv preprint https://arxiv.org/abs/2003.02248},
      YEAR = {2020},
}
\bib{av}{article}{
    AUTHOR = {Cesaroni, Annalisa},
    author={Pagliari, Valerio},
     TITLE = {Convergence of nonlocal geometric flows to anisotropic mean curvature motion},
   JOURNAL = {arxiv preprint  https://arxiv.org/abs/1811.01732},
      YEAR = {2018},
}

\bib{MR3401008}{article}{
   author={Chambolle, Antonin},
   author={Morini, Massimiliano},
   author={Ponsiglione, Marcello},
   title={Nonlocal curvature flows},
   journal={Arch. Ration. Mech. Anal.},
   volume={218},
   date={2015},
   number={3},
   pages={1263--1329},
}

\bib{chn}{article}{
   author={Chambolle, Antonin},
   author={Novaga, Matteo},
   title={Anisotropic and crystalline mean curvature flow of mean-convex sets},
   journal={arxiv preprint https://arxiv.org/abs/2004.00270},
   volume={},
   date={2020},
   number={},
   pages={},
}

\bib{ruf}{article}{
   author={Chambolle, Antonin},
   author={Novaga, Matteo},
   author={Ruffini, Berardo},
   title={Some results on anisotropic fractional mean curvature flows},
   journal={Interfaces Free Bound.},
   volume={19},
   date={2017},
   number={3},
   pages={393--415},
}

\bib{MR1100211}{article}{
   author={Chen, Yun Gang},
   author={Giga, Yoshikazu},
   author={Goto, Shun'ichi},
   title={Uniqueness and existence of viscosity solutions of generalized
   mean curvature flow equations},
   journal={J. Differential Geom.},
   volume={33},
   date={1991},
   number={3},
   pages={749--786},
}

\bib{cinti1}{article}{
   author={Cinti, Eleonora},
   author={Sinestrari, Carlo},
   author={Valdinoci, Enrico},
   title={Neckpinch singularities in fractional mean curvature flows},
   journal={Proc. Amer. Math. Soc.},
   volume={146},
   date={2018},
   number={6},
   pages={2637--2646},
   issn={0002-9939},
   review={\MR{3778164}},
}	
	
\bib{cinti2}{article}{
   author={Cinti, Eleonora},
   author={Sinestrari, Carlo},
   author={Valdinoci, Enrico},
     TITLE = {Convex sets evolving by volume preserving fractional mean curvature flows},
   JOURNAL = { Anal. PDE},
    VOLUME = {13},
       number={7},
      YEAR = {2020},
     PAGES = {2149--2171},
}

\bib{cozzi}{article}{
    AUTHOR = {Cozzi, Matteo},
     TITLE = {On the variation of the fractional mean curvature under the
              effect of {$C^{1,\alpha}$} perturbations},
   JOURNAL = {Discrete Contin. Dyn. Syst.},
  FJOURNAL = {Discrete and Continuous Dynamical Systems. Series A},
    VOLUME = {35},
      YEAR = {2015},
    NUMBER = {12},
     PAGES = {5769--5786},
}
	
\bib{dephilaux}{article}{
author={Guido De Philippis},
author={Tim Laux},
title={Implicit time discretization for the mean curvature flow of outward minimizing sets}, 
journal={to appear on Ann. Sc. Norm. Super. Pisa Cl. Sci.}, 
volume={} 
}

\bib{MR1100206}{article}{
   author={Evans, Lawrence C.},
   author={Spruck, J.},
   title={Motion of level sets by mean curvature. I},
   journal={J. Differential Geom. },
   volume={33},
   date={1991},
   number={3},
   pages={635--681},
   }
   
 \bib{GH}{article}{
    AUTHOR = {Gage, Michael E.},
    AUTHOR = {Hamilton, Richard S.},
     TITLE = {The heat equation shrinking convex plane curves},
   JOURNAL = {J. Differential Geom.},
  FJOURNAL = {Journal of Differential Geometry},
    VOLUME = {23},
      YEAR = {1986},
    NUMBER = {1},
     PAGES = {69--96},
      ISSN = {0022-040X},
   MRCLASS = {53A04 (35K05 52A40 58E99 58G11)},
  MRNUMBER = {840401},
MRREVIEWER = {R. Osserman},
       URL = {http://projecteuclid.org/euclid.jdg/1214439902},
}  

\bib{MR2238463}{book}{
   author={Giga, Yoshikazu},
   title={Surface evolution equations. A level set approach},
   series={Monographs in Mathematics},
   volume={99},
   publisher={Birkh\"auser Verlag, Basel},
   date={2006},
   pages={xii+264},
   isbn={978-3-7643-2430-8},
   isbn={3-7643-2430-9},
}

\bib{gigagotoishiisato}{article}{
    AUTHOR = {Giga, Yoshikazu},
    author={Goto, Shun'ichi},
    author={Ishii, Hitoshi},
    author={Sato, Moto-Hiko},
     TITLE = {Comparison principle and convexity preserving properties for
              singular degenerate parabolic equations on unbounded domains},
   JOURNAL = {Indiana Univ. Math. J.},
  FJOURNAL = {Indiana University Mathematics Journal},
    VOLUME = {40},
      YEAR = {1991},
    NUMBER = {2},
     PAGES = {443--470},
}

\bib{HK}{article}{
    AUTHOR = {Haslhofer, Robert} 
    AUTHOR = {Kleiner, Bruce},
     TITLE = {Mean curvature flow of mean convex hypersurfaces},
   JOURNAL = {Comm. Pure Appl. Math.},
  FJOURNAL = {Communications on Pure and Applied Mathematics},
    VOLUME = {70},
      YEAR = {2017},
    NUMBER = {3},
     PAGES = {511--546},
      ISSN = {0010-3640},
   MRCLASS = {53C44 (49Q20 53C42)},
  MRNUMBER = {3602529},
MRREVIEWER = {Qi Ding},
}

\bib{HW}{article}{
    AUTHOR = {Hershkovits, Or} 
    AUTHOR = {White, Brian},
     TITLE = {Nonfattening of mean curvature flow at singularities of mean
              convex type},
   JOURNAL = {Comm. Pure Appl. Math.},
  FJOURNAL = {Communications on Pure and Applied Mathematics},
    VOLUME = {73},
      YEAR = {2020},
    NUMBER = {3},
     PAGES = {558--580},
      ISSN = {0010-3640},
   MRCLASS = {53E10 (35K93)},
  MRNUMBER = {4057901},
  }

\bib{Hu}{article}{
    AUTHOR = {Huisken, Gerhard},
     TITLE = {Asymptotic behavior for singularities of the mean curvature
              flow},
   JOURNAL = {J. Differential Geom.},
  FJOURNAL = {Journal of Differential Geometry},
    VOLUME = {31},
      YEAR = {1990},
    NUMBER = {1},
     PAGES = {285--299},
      ISSN = {0022-040X},
   MRCLASS = {53A10 (35B99 53C45 58G11)},
  MRNUMBER = {1030675},
MRREVIEWER = {Dennis M. DeTurck},
}
	
\bib{HS}{article}{
    AUTHOR = {Huisken, Gerhard} 
    AUTHOR = {Sinestrari, Carlo},
     TITLE = {Mean curvature flow singularities for mean convex surfaces},
   JOURNAL = {Calc. Var. Partial Differential Equations},
  FJOURNAL = {Calculus of Variations and Partial Differential Equations},
    VOLUME = {8},
      YEAR = {1999},
    NUMBER = {1},
     PAGES = {1--14},
      ISSN = {0944-2669},
   MRCLASS = {58E12 (35K55 53A10)},
  MRNUMBER = {1666878},
MRREVIEWER = {John Urbas},
}
	
\bib{imbert}{article}{
   author={Imbert, Cyril},
   title={Level set approach for fractional mean curvature flows},
   journal={Interfaces Free Bound.},
   volume={11},
   date={2009},
   number={1},
   pages={153--176},
}

\bib{vesa}{article}{
    AUTHOR = {Julin, Vesa},
    author= {La Manna, Domenico Angelo},
     TITLE = {Short time existence of the classical solution to the
              fractional mean curvature flow},
   JOURNAL = {Ann. Inst. H. Poincar\'{e} Anal. Non Lin\'{e}aire},
    VOLUME = {37},
      YEAR = {2020},
    NUMBER = {4},
     PAGES = {983--1016},
}

\bib{LS}{article}{
    AUTHOR = {Luckhaus, Stephan}
     AUTHOR = {Sturzenhecker, Thomas},
     TITLE = {Implicit time discretization for the mean curvature flow
              equation},
   JOURNAL = {Calc. Var. Partial Differential Equations},
  FJOURNAL = {Calculus of Variations and Partial Differential Equations},
    VOLUME = {3},
      YEAR = {1995},
    NUMBER = {2},
     PAGES = {253--271},
      ISSN = {0944-2669},
   MRCLASS = {65M06 (58E12)},
  MRNUMBER = {1386964},
MRREVIEWER = {R. N. Mukherjee},
}

\bib{pagliari}{article}{
AUTHOR = {Pagliari, Valerio},
     TITLE = {Halfspaces minimise nonlocal perimeter: a proof {\it via}
              calibrations},
   JOURNAL = {Ann. Mat. Pura Appl. (4)},
  FJOURNAL = {Annali di Matematica Pura ed Applicata. Series IV},
    VOLUME = {199},
      YEAR = {2020},
    NUMBER = {4},
     PAGES = {1685--1696},
      ISSN = {0373-3114},
   MRCLASS = {49Q20 (35R11 49J45 49Q05)},
  MRNUMBER = {4117514},
       URL = {https://doi.org/10.1007/s10231-019-00937-7},
}
		
\bib{SAEZ}{article}{
   author = {{S{\'a}ez}, Mariel},
   author = {Valdinoci, Enrico},
    title = {On the evolution by fractional mean curvature},
  journal = {Comm. Anal. Geom.},
   volume={27},
   date = {2019},
    number={1},
    pages={211-249},
}

\bib{soner}{article}{
  author={Soner, Halil Mete},
   title={Motion of a set by the curvature of its boundary},
   journal={J. Differential Equations},
   volume={101},
   date={1993},
   number={2},
   pages={313--372},
}

\bib{spadaro}{article}{ 
    AUTHOR = {Spadaro, Emanuele},
     TITLE = {Mean-convex sets and minimal barriers},
   JOURNAL = {Matematiche},
  FJOURNAL = {Le Matematiche},
    VOLUME = {75},
      YEAR = {2020},
    NUMBER = {1},
     PAGES = {353--375},
}

\bib{white}{article}{
    AUTHOR = {White, Brian},
     TITLE = {The size of the singular set in mean curvature flow of
              mean-convex sets},
   JOURNAL = {J. Amer. Math. Soc.},
  FJOURNAL = {Journal of the American Mathematical Society},
    VOLUME = {13},
      YEAR = {2000},
    NUMBER = {3},
     PAGES = {665--695},
}
		
\end{biblist}\end{bibdiv}

\end{document}